\documentclass{article}

\usepackage[a4paper, margin=25mm]{geometry}
\usepackage[T1]{fontenc} 
\usepackage{lmodern}

\usepackage{amsmath, amssymb, amsthm, mathtools, bm, tikz}
\usepackage{xcolor}
\usepackage{xpatch}
\usepackage[shortlabels,inline]{enumitem}

\usepackage[
	sorting=nyt,
	maxbibnames=99,
	backref,
	backrefstyle=none, 
	style=alphabetic,
    maxalphanames=5, 
    minalphanames=1   
]{biblatex}

\usepackage[
	colorlinks,
	linkcolor={red!80!black},
	citecolor={blue!80!black},
	urlcolor={green!80!black}
]{hyperref}
\usepackage[capitalise]{cleveref}

\newcommand{\footremember}[2]{%
    \footnote{#2}
    \newcounter{#1}
    \setcounter{#1}{\value{footnote}}%
}
\newcommand{\footrecall}[1]{%
    \footnotemark[\value{#1}]%
}

\setlist[enumerate,1]{label={(\roman*)}}

\numberwithin{equation}{section}
\newcommand\numberthis{\addtocounter{equation}{1}\tag{\theequation}}

\makeatletter
\NewCommandCopy\latexparagraph\paragraph
\RenewDocumentCommand{\paragraph}{sO{#3}m}{%
  \IfBooleanTF{#1}
    {\latexparagraph*{\maybe@addperiod{#3}}}
    {\latexparagraph[#2]{\maybe@addperiod{#3}}}%
}
\newcommand{\maybe@addperiod}[1]{%
  #1\@addpunct{.}%
}
\makeatother

\tikzset{every loop/.style={}}
\tikzset{inline vertex/.style={draw, circle, fill=black, minimum size=2pt, inner sep=0pt}}

\newcommand{\indepg}{\!%
    \begin{tikzpicture}[nodes=inline vertex]
        \node (A) at (0,0) {};
        \node (B) at (0.42,0) {};
        \draw[looseness=15] (A) to[out=125, in=55] (A);
        \draw (A) -- (B);
    \end{tikzpicture}%
    \:\!%
}

\newcommand{\triloop}{\!%
    \begin{tikzpicture}[nodes=inline vertex]
        \node (A) at (0,0) {};
        \node (B) at (0.42,0) {};
        \node (C) at (0.21,0.15) {};
        \draw[looseness=15] (A) to[out=125, in=55] (A);
        \draw (A) -- (B) -- (C) -- (A);
    \end{tikzpicture}%
    \:\!%
}

\DeclareFieldFormat*{title}{\mkbibemph{#1\isdot}}

\DeclareFieldFormat{url}{\href{#1}{\textsc{link}}}
\DeclareFieldFormat{doi}{\href{https://dx.doi.org/#1}{\textsc{doi}}}
\DeclareFieldFormat{eprint:arxiv}{\href{https://arxiv.org/abs/#1}{arXiv:\allowbreak{#1}}}
\DeclareFieldFormat{eprint}{\href{https://arxiv.org/abs/#1}{arXiv:\allowbreak{#1}}}
\AtEveryBibitem{\clearfield{issn}\clearfield{isbn}\clearlist{language}}

\renewbibmacro*{doi+eprint+url}{%
	\printfield{doi}%
	\newunit\newblock%
	\iftoggle{bbx:eprint}{%
		\usebibmacro{eprint}%
	}{}%
	\newunit\newblock%
	\iffieldundef{doi}{%
    \iffieldundef{eprint}{%
		\usebibmacro{url+urldate}%
	}{}}{}%
}

\NewBibliographyString{toappear}
\DefineBibliographyStrings{english}{%
	toappear = {to appear},
}

\renewbibmacro*{in:}{%
	\iffieldundef{pubstate}
	{}
	{\printfield{pubstate}%
		\setunit{\addspace}%
		\clearfield{pubstate}}%
}

\DefineBibliographyStrings{english}{
	backrefpage={},
	backrefpages={}
}

\DeclareFieldFormat{backrefparens}{\addperiod{\scriptsize{#1}}}
\xpatchbibmacro{pageref}{parens}{backrefparens}{}{}

\theoremstyle{plain}
\newtheorem{theorem}{Theorem}[section]
\newtheorem{proposition}[theorem]{Proposition}
\newtheorem{lemma}[theorem]{Lemma}
\newtheorem{conjecture}[theorem]{Conjecture}
\newtheorem{claim}[theorem]{Claim}

\theoremstyle{definition}
\theoremstyle{remark} 
\newtheorem*{remark}{Remark}

\AddToHook{env/proposition/begin}{\crefalias{theorem}{proposition}}
\AddToHook{env/lemma/begin}{\crefalias{theorem}{lemma}}
\AddToHook{env/conjecture/begin}{\crefalias{theorem}{conjecture}}
\AddToHook{env/claim/begin}{\crefalias{theorem}{claim}}

\makeatletter
\newenvironment{claimproof}[1][Proof]{\par
    \pushQED{\qed}%
    
    \normalfont \topsep6\p@\@plus6\p@\relax
    \trivlist
    \item[\hskip\labelsep
    \textit{#1\@addpunct{.}}~]\ignorespaces
}{%
    \popQED\endtrivlist\@endpefalse
}
\makeatother

\makeatletter
\newcommand{\step}[1]{%
  \par
  \addvspace{\medskipamount}
  \noindent%
  \textbf{#1\@addpunct{.}}\enspace\ignorespaces
}
\makeatother

\DeclarePairedDelimiter{\abs}{\lvert}{\rvert}

\newcommand{\defeq}{\coloneqq}
\newcommand{\eqdef}{\eqqcolon}

\newcommand{\RR}{\mathbb{R}}
\newcommand{\calI}{\mathcal{I}}
\newcommand{\calS}{\mathcal{S}}

\newcommand{\blambda}{\bm{\lambda}}
\newcommand{\bmu}{\bm{\mu}}
\newcommand{\one}{\mathbf{1}}

\newcommand{\cart}{\mathbin{\square}} 
\newcommand{\diff}{\mathop{}\!d} 

\newcommand{\Aletheia}{\emph{Aletheia}}

\title{Lower bounds for multivariate independence polynomials and their generalisations}
\author{Joonkyung Lee%
    \footremember{Yonsei}{
        Department of Mathematics, Yonsei University, Seoul, South Korea. Research supported by Samsung STF Grant SSTF-BA2201-02 and the National Research Foundation of Korea (NRF) grant MSIT NRF-2022R1C1C1010300. Email: \texttt{\{joonkyunglee, jaehyeonseo\}@yonsei.ac.kr}. Some mathematical content of this paper was  generated by a custom mathematics research agent powered by Gemini Deep Think. Further documentation, including raw prompts and outputs, can be found in a \href{https://github.com/google-deepmind/superhuman/tree/main/aletheia}{GitHub repository}.
        A Lean formalisation of~\Cref{thm:semiprop-multiaff-lower-bd} is available in the second author's \href{https://github.com/thegreatseo/multivar-indep-formalize}{GitHub repository}.
    }
    \and 
    Jaehyeon Seo\footrecall{Yonsei}}
\date{}

\begin{document}

\maketitle

\begin{abstract}
    In statistical physics, the multivariate hard-core model describes a system of particles, each of which receives its own fugacity.
    In graph-theoretic language, the partition function of the model translates to the \emph{multivariate independence polynomial}, i.e., the multiaffine generalisation of the independence polynomial, defined by
    $Z_G(\lambda_1,\dots,\lambda_n) \defeq \sum_{I\in\mathcal{I}(G)} \prod_{v\in I}\lambda_v$,
    where $\mathcal{I}(G)$ denotes the set of all independent sets in a graph $G$ on $[n]\defeq\{1,2,\dots,n\}$.
    We prove that for every simple graph $G$ on $[n]$ and $\lambda_1,\dots,\lambda_n\geq 0$,
    \[
        Z_G(\lambda_1,\dots,\lambda_n) \geq \prod_{i=1}^n (1+(d_i+1)\lambda_i)^{1/(d_i+1)},
    \]
    where $d_1,\dots,d_n$ is the degree sequence of $G$. This generalises a result of Sah, Sawhney, Stoner, and Zhao, who proved the univariate case $\lambda_1=\dots=\lambda_n=\lambda$. 
    
    We further conjecture that our inequality should generalise to other antiferromagnetic models and give some evidence in support of it. In particular, for $\lambda_i,\mu_i\geq 0$, $1\leq i\leq n$, we obtain a stronger inequality
    \[
        \sum_{\substack{I,J\in \mathcal{I}(G) \\ I\cap J=\emptyset}} \prod_{v\in I}\lambda_v\prod_{u\in J}\mu_u
        \geq \prod_{i=1}^n \left(1+(d_i+1)(\lambda_i+\mu_i)+d_i(d_i+1)\lambda_i\mu_i\right)^{1/(d_i+1)},
    \]
    which proves our conjecture for a multiaffine generalisation of the \emph{semiproper colouring} partition function with two proper colours. The proof is formalised in the Lean~4 proof assistant using Harmonic Aristotle.

    Our key technical steps for both theorems were obtained by using a custom mathematical research agent built on top of Gemini Deep Think, which can be seen as a benchmark demonstrating that the current state-of-the-art language models can, in part, assist with mathematical research.
\end{abstract}

\section{Introduction}
Independence polynomials are a central object in both combinatorics and statistical physics. 
In particular, its multiaffine generalisation, the \emph{multivariate independence polynomial}, defined by
\[
    Z_G(\blambda) = \sum_{I\in \calI(G)}\prod_{v\in I}\lambda_v,
\]
where $\blambda=(\lambda_v)_{v\in V(G)}$ and $\calI(G)$ denotes the set of independent sets in $G$, has played an important role in bridging combinatorics and statistical physics. 
Indeed, it is the \emph{partition function} of the multivariate hard-core model and furthermore, the \emph{cluster expansion}, which relies on the multivariate Taylor expansion of $\log Z_G(\blambda)$, has been an important tool in modern combinatorics; see~\cite{davies2025hardcore,Jenssen_2024} for an overview.

With slight abuse of notation, write $Z_G(\lambda)\defeq Z_G(\lambda,\dots,\lambda)$, the usual single-variate independence polynomial or, equivalently, the partition function of the hard-core model with fugacity $\lambda$.
Our first main result is to give a general lower bound for $Z_G(\blambda)$ that works for any finite simple graph $G$ and any nonnegative reals $\lambda_v$, \(v\in V(G)\).

\begin{theorem}\label{thm:indep-poly-multiaff-lower-bd}
	Let \(G\) be a graph and let \(\blambda=(\lambda_v)\in (\RR_{\geq 0})^{V(G)}\). Write $d_v\defeq \deg_G(v)$. Then
	\begin{equation}\label{eq:main}
	    Z_G(\blambda) \ge \prod_{v \in V(G)} Z_{K_{d_v+1}}(\lambda_v)^{\frac{1}{d_v+1}}
		= \prod_{v \in V(G)} ( (d_v+1)\lambda_v + 1 )^{\frac{1}{d_v+1}}.
	\end{equation}
    Equality holds if and only if on each connected component of \(G\), either
    \begin{enumerate*}
        \item all \(\lambda_v=0\) or 
        \item \(\lambda_v\) is constant and each component of $G$ is a clique.
    \end{enumerate*}
\end{theorem}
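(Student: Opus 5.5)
We argue by induction on the number of vertices, using the vertex-deletion recursion for the multivariate independence polynomial. Fix a vertex $v$ and write $N(v)$ for its neighbourhood. Then
\[
    Z_G(\blambda) = Z_{G-v}(\blambda) + \lambda_v Z_{G-v-N(v)}(\blambda).
\]
The induction hypothesis applies to $G-v$, in which each neighbour $u$ of $v$ has degree $d_u-1$, and to $G - v - N(v)$. This follows the strategy of Sah, Sawhney, Stoner and Zhao in the univariate case. It is cleaner to work with ratios: divide by $Z_{G-v}$ and write $Z_{G-v-N(v)}/Z_{G-v}$ as the probability that $N(v)$ is unoccupied in the hard-core measure on $G-v$. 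The goal is then an inequality of the form
\[
    1+\lambda_v \Pr_{G-v}[N(v)\cap I=\emptyset] \ \ge\ (1+(d_v+1)\lambda_v)^{\frac1{d_v+1}}\prod_{u\in N(v)} \frac{(1+(d_u+1)\lambda_u)^{1/(d_u+1)}}{(1+d_u\lambda_u)^{1/d_u}},
\]
provided we can also control $Z_{G-v}$ from below by the inductive bound. Because the factors for the neighbours change when $v$ is deleted, I would choose which vertex to peel carefully; for instance, take $v$ of maximum degree, or $v$ minimising an appropriate potential.

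A more robust alternative is an entropy or Shearer-type argument. Let $I$ be drawn from the hard-core measure with fugacities $\blambda$, so that $\log Z_G = H(I) + \sum_v \Pr[v\in I]\log\lambda_v$. Decompose $H(I)$ vertex by vertex along a uniformly random ordering, as in the proof of Sah--Sawhney--Stoner--Zhao. Vertex $v$ contributes the conditional entropy of $\mathbf 1_{v\in I}$ given the status of the vertices preceding it. With probability $1/(d_v+1)$, $v$ precedes all its neighbours in the ordering. The local problem is then to show that, after the weights $\lambda_v$ are accounted for, each vertex's contribution is at least $\frac1{d_v+1}\log(1+(d_v+1)\lambda_v)$. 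This amounts to comparing the closed neighbourhood $N[v]$ with the clique $K_{d_v+1}$, where the random-order contribution is exactly $\frac{1}{d_v+1}\log Z_{K_{d_v+1}}$. Because the multivariate weights differ across vertices, I would apply the clique comparison to each vertex's own term with its own $\lambda_v$. This would explain why the bound involves only $\lambda_v$ and $d_v$ rather than the neighbours' weights.

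The main obstacle is the local inequality: the per-vertex term must be bounded using only $\lambda_v$ and $d_v$, even though the conditional law of $N(v)$ depends on the neighbours' fugacities. I would first try a convexity argument. Regard the contribution as a function of $p=\Pr[N(v)\cap I=\emptyset \mid \text{past}]$, optimise over $p\in[0,1]$, and show that the worst case reduces to a one-variable inequality in $\lambda_v$ and $d_v$, which is tight exactly when $N[v]$ behaves like a clique.

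The equality characterisation then comes from tracking when every step is tight. If all $\lambda_v=0$ on a component, both sides equal $1$. Otherwise, tightness of the local inequality forces $N[v]$ to be a clique with uniform weights, and connectivity propagates these conditions across the whole component.
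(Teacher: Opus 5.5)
Both of your routes break down at the step that carries the real content of the proof.

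\textbf{The inductive route.} Dividing by $Z_{G-v}$ is a misstep. Write $Z^-_G$ for the right-hand side of \eqref{eq:main}. The induction hypothesis only supplies lower bounds. After using $Z_{G-v}\ge Z^-_{G-v}$ you still need $1+\lambda_v p\ge Z^-_G/Z^-_{G-v}$ with the true $p=Z_{G-N[v]}/Z_{G-v}$, and nothing bounds $p$ from below. In fact this target can fail even at a vertex of maximum degree. Let $v$ be adjacent to $u_1,\dots,u_\Delta$ with $\Delta\ge2$. Give each $u_i$ a further $\Delta-1$ pendant neighbours of weight $0$, and set $\lambda_v=\lambda_{u_i}=\lambda>0$. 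Then $p=(1+\lambda)^{-\Delta}$, whereas $Z^-_G/Z^-_{G-v}=1+\lambda/(1+\Delta\lambda)$, so your target fails.

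The remedy is not to divide. Bound the two summands separately: $Z_G\ge Z^-_{G-v}+\lambda_v Z^-_{G-N[v]}$. For the second term, passing to $G-N[v]$ only lowers degrees, which helps because $(1+k\lambda)^{1/k}$ decreases in $k$; the paper instead zeroes the weights on $N(v)$ inside $G-v$. After cancelling the factors of vertices outside $N[v]$, what remains is the purely numerical inequality
\[
\prod_{i=1}^{\Delta}(1+d_i\lambda_i)^{\frac1{d_i}}+\lambda_0\ \ge\ (1+(\Delta+1)\lambda_0)^{\frac1{\Delta+1}}\prod_{i=1}^{\Delta}(1+(d_i+1)\lambda_i)^{\frac1{d_i+1}},\qquad 1\le d_i\le\Delta .
\]
In the example above this holds with equality, while your target fails. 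The constraint $d_i\le\Delta$ is exactly where choosing $v$ of maximum degree is used. This inequality, \Cref{lem:key-tech-ineq}, is the heart of the proof, and your proposal neither formulates nor proves it. The paper proves it in three steps:
\begin{itemize}
\item AM--GM reduces it to $(\Delta+1)A\ge\Delta B^{(\Delta+1)/\Delta}+1$, where $A$ and $B$ are the two products over the neighbours.
\item A one-variable monotonicity claim for each neighbour, which needs $d_i\le\Delta$ and Bernoulli's inequality, handles each factor.
\item The inequality $\prod_i(1+y_i)^{1/\Delta}\ge1+\prod_i y_i^{1/\Delta}$ combines the factors.
\end{itemize}

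\textbf{The entropy route.} This fails more fundamentally: the per-vertex local inequality you aim for is false, so no optimisation over $p$ can establish it. Under the hard-core measure, the contribution $H(\mathbf 1_{v\in I}\mid\text{predecessors})+\Pr[v\in I]\log\lambda_v$, averaged over uniform orderings, can fall below $\frac1{d_v+1}\log(1+(d_v+1)\lambda_v)$.

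Take the path on three vertices with $\lambda\equiv1$, and let $h(t)=-t\log t-(1-t)\log(1-t)$. The centre contributes $\frac13\bigl(h(\tfrac15)+\tfrac35 h(\tfrac13)+\tfrac25\log 2\bigr)\approx0.387$, which is less than $\tfrac13\log4\approx0.462$.

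More drastically, take the centre of a star $K_{1,d}$ with $d\ge2$ and uniform $\lambda\to\infty$. The centre is almost never occupied, so its contribution tends to $0$ while its target tends to infinity. The theorem survives only because the leaves contribute more than their share.

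Any correct argument must therefore couple a vertex with its neighbours, which is precisely what the displayed inequality does at a maximum-degree vertex. For the same reason, your equality analysis via tightness of a per-vertex bound does not stand. It has to track the equality cases of \Cref{lem:key-tech-ineq} together with tightness of both inductive lower bounds.
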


This generalises a theorem of Sah, Sawhney, Stoner, and Zhao \cite[Theorem~1.7]{sah2019number}, who proved~\eqref{eq:main} for single-variate independence polynomials, i.e., the case $\lambda_v=\lambda$ for all $v\in V(G)$. We also remark that the simplest case, when $\lambda_v=1$ and $d_v=d$ for all $v\in V(G)$, was first proved by Cutler and Radcliffe~\cite[Corollary~2.3]{cutler2017maximum}.
It is worth noting that, unlike previous results, the inequality~\eqref{eq:main} for $G=K_{d+1}$ is not an equality but an AM--GM-type inequality.

A quick corollary of the inequality~\eqref{eq:main} is a lower bound for the generalised independence polynomial $Z_{G}(\lambda;\alpha)=\sum_{I\in\calI(G)}\lambda^{\abs{I}}\alpha^{e(I,\overline{I})}$,
where $e(I,\overline{I})$ is the number of crossing edges between $I$ and $V(G)\setminus I$, a partition function of a \(2\)-spin \emph{antiferromagnetic} model, which we shall define later.
Then setting $\lambda_v=\lambda \alpha^{d_v}$ in~\eqref{eq:main} gives the following.

\begin{theorem}\label{thm:2spin}
    Let \(G\) be a graph and let \(\lambda\) and $\alpha$ be nonnegative reals. Write $d_v\defeq \deg_G(v)$. Then
	\[
	    Z_G(\lambda;\alpha) \ge \prod_{v \in V(G)} Z_{K_{d_v+1}}(\lambda;\alpha)^{\frac{1}{d_v+1}}
		= \prod_{v \in V(G)} \bigl( (d_v+1)\lambda\alpha^{d_v} + 1 \bigr)^{\frac{1}{d_v+1}}.
	\]
\end{theorem}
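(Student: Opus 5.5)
The plan is to obtain \Cref{thm:2spin} as a direct specialisation of \Cref{thm:indep-poly-multiaff-lower-bd}, as the text preceding it indicates. First we rewrite $Z_G(\lambda;\alpha)$ as a multivariate independence polynomial. For an independent set $I$, every edge incident to a vertex of $I$ has its other endpoint outside $I$. Hence
\[
    e(I,\overline{I}) = \sum_{v\in I} d_v .
\]
Substituting this into the definition gives
\[
    Z_G(\lambda;\alpha)=\sum_{I\in\calI(G)}\prod_{v\in I}\lambda\alpha^{d_v}=Z_G(\blambda),
    \qquad \text{where } \lambda_v\defeq\lambda\alpha^{d_v}.
\]
Since $\lambda,\alpha\ge 0$, each $\lambda_v\ge 0$, so $\blambda\in(\RR_{\ge0})^{V(G)}$ is admissible (with the convention $0^0=1$ when $\alpha=0$ and $d_v=0$).

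Applying \eqref{eq:main} with this $\blambda$ yields
\[
    Z_G(\lambda;\alpha)\ge\prod_{v}\bigl((d_v+1)\lambda\alpha^{d_v}+1\bigr)^{1/(d_v+1)}.
\]
The remaining task is the identification of $Z_{K_{d+1}}(\lambda;\alpha)$ with $(d+1)\lambda\alpha^{d}+1$. In $K_{d+1}$ the independent sets are the empty set, which contributes $1$, and the $d+1$ singletons. Each singleton has degree $d$, so it has $d$ crossing edges and contributes $\lambda\alpha^{d}$. This gives the stated equality, and the proof is complete.

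There is no real obstacle here. The only points needing care are the crossing-edge count, which uses independence of $I$ in an essential way, and the convention $\alpha^0=1$ for isolated vertices. The latter makes $K_1$ contribute the factor $(1+\lambda)$, which is consistent on both sides.
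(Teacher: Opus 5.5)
Your proposal is correct and follows the paper's route exactly: substitute $\lambda_v=\lambda\alpha^{d_v}$ into \eqref{eq:main}. This works because $e(I,\overline{I})=\sum_{v\in I}d_v$ for every independent set $I$, and because $Z_{K_{d+1}}(\lambda;\alpha)=1+(d+1)\lambda\alpha^{d}$.
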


We suspect that this result generalises even further. To elaborate on what this would mean, let $H$ be an edge-weighted graph possibly with loops. Let
\[
    \hom (G,H) \defeq \sum_{\phi:V(G)\rightarrow V(H)}\prod_{uv\in E(G)}H(\phi(u),\phi(v))
\]
be the (weighted) homomorphism count for $G$ in $H$, where $H(\cdot,\cdot)$ denotes the corresponding edge weight of $H$.
Then the independence polynomial $Z_G(\lambda)$ for positive rational values $\lambda=q/p$, $p,q\in\mathbb{N}$, corresponds to $\hom(G,H^{\lambda})$, where $H^{\lambda}$ denotes a suitably `blown-up' copy of the graph $H=\indepg$, i.e., replace the two vertices by $p$ self-looped ones that connect to one another and $q$ non-looped ones, respectively, and put the complete bipartite graph $K_{p,q}$ in between. A graph $H$ with nonnegative edge weights is said to be \emph{antiferromagnetic} if $H$, as a symmetric matrix with entries corresponding to the edge weights, has exactly one positive eigenvalue, counting multiplicity. For instance, $H=\indepg$ is antiferromagnetic, and so are its blow-ups $H^\lambda$.
We conjecture that any antiferromagnetic $H$ is a `clique-minimiser' in the following sense.

\begin{conjecture}\label{conj:main}
    Let \(G\) be a graph and write $d_v\defeq \deg_G(v)$. Then for any antiferromagnetic edge-weighted graph $H$,
    \begin{equation}\label{eq:afm}
        \hom(G,H)\geq \prod_{v\in V(G)}\hom(K_{d_v+1},H)^{\frac{1}{d_v+1}}.
    \end{equation}
\end{conjecture}

For instance, the Sah--Sawhney--Stoner--Zhao inequality is the aforementioned case $H^{\lambda}$ with $H=\indepg$. It is not hard to settle the case $H=K_q$, by a slight generalisation of Csikv\'{a}ri's inequality, recorded in Zhao's survey~\cite[Theorem~8.3]{zhao2017extremal}.
A particular case of~\Cref{conj:main} when $G$ is $d$-regular $H$ represents an antiferromagnetic \emph{Ising model}, i.e., the $2\times 2$ matrix with $H_{11}=H_{22}=B\in (0,1)$ and $H_{12}=H_{21}=1$, was conjectured by Davies an Leblanc~\cite[Conjecture~5(ii)]{DL24}.
It can also be seen as an antiferromagnetic counterpart to another theorem of Sah, Sawhney, Stoner, and Zhao \cite[Theorem~1.14]{sah2020reverse}, stating that the reverse inequality to~\eqref{eq:afm} holds whenever $H$ is \emph{ferromagnetic}, i.e., all the eigenvalues are nonnegative. 

Recall that \Cref{thm:2spin} makes partial progress towards~\Cref{conj:main}. We make further progress in multiple directions. First of all, we confirm the conjecture for $K_3^{\circ}\defeq\triloop$, one loop added to a triangle, in a stronger form that generalises~\Cref{thm:indep-poly-multiaff-lower-bd}.
To state the result, for $\blambda,\bmu\in\RR^{V(G)}$, let
\[
    Z_G^{(2)}(\blambda,\bmu)\defeq \sum_{\substack{I,J\in \calI(G) \\ I\cap J=\emptyset}} \prod_{v\in I}\lambda_v\prod_{u\in J}\mu_u
\]
and write $Z_G^{(2)}(\lambda,\mu)=Z_G^{(2)}(\lambda\one,\mu\one)$, where $\one=(1,1,\dots,1)$.
In particular, $Z_G^{(2)}(1,1)=\hom(G,K_3^{\circ})$. This corresponds to the number of \emph{semiproper colourings} with two proper colours, i.e., the vertex colouring using two `proper' colours that cannot make an edge monochromatic and one `free' colour that can be used with no restrictions.
Furthermore, $Z_G^{(2)}(\lambda,\mu)$, $\lambda,\mu>0$, counts $\hom(G,H)$, where $H$ is obtained by blowing up vertices of $K_3^{\circ}$ in an appropriate way. For instance, adjusting $\lambda$ and $\mu$ gives a semiproper colouring with two proper colours and multiple free colours as well.
Our second main result is the following.
\begin{theorem}\label{thm:semiprop-multiaff-lower-bd}
    Let \(G\) be a graph and let \(\lambda_v\) and \(\mu_v\), \(v\in V(G)\), be nonnegative reals. Then
	\begin{equation}\label{eq:main2}
	    Z_G^{(2)}(\blambda,\bmu) \ge \prod_{v \in V(G)} Z_{K_{d_v+1}}^{(2)}(\lambda_v,\mu_v)^{\frac{1}{d_v+1}}
		= \prod_{v \in V(G)} \big( (d_v+1)d_v\lambda_v\mu_v + (d_v+1)(\lambda_v +\mu_v) + 1 \big)^{\frac{1}{d_v+1}}.
    \end{equation}
\end{theorem}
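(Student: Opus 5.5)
Let $F_v(\lambda,\mu)\defeq 1+(d_v+1)(\lambda+\mu)+d_v(d_v+1)\lambda\mu$ and prove \eqref{eq:main2} by induction on $|V(G)|$, following the Sah--Sawhney--Stoner--Zhao vertex-deletion scheme adapted to the multiaffine setting. The base case $|V(G)|=0$ is trivial. Since $Z_G^{(2)}$ is multiaffine in each pair $(\lambda_v,\mu_v)$, we can expand at a single vertex $v$. A disjoint pair $(I,J)$ of independent sets puts $v$ in $I$, in $J$, or in neither, so
\[
Z_G^{(2)}(\blambda,\bmu)=Z^{(2)}_{G-v}+\lambda_v Z^{(2)}_{G-N[v]}(\cdot\,;\mu\text{ on }N(v)) + \mu_v Z^{(2)}_{G-N[v]}(\lambda\text{ on }N(v)\,;\cdot).
\]
More precisely, if $v\in I$, its neighbours may still lie in $J$. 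So the second term is $Z^{(2)}_{G-v}$ with $\lambda_u$ set to $0$ for $u\in N(v)$, and the third term is symmetric. This is the key difference from the independence polynomial: the neighbours are not deleted but lose one fugacity. This is exactly the structure that produces the term $d_v(d_v+1)\lambda\mu$.

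\textbf{Step 1 (ratio form).} Write $Z_G=Z_{G-v}\,(1+\lambda_v \langle\prod_{u\in N(v)}[u\notin I]\rangle+\mu_v\langle\prod_{u\in N(v)}[u\notin J]\rangle)$, where $\langle\cdot\rangle$ is the Gibbs measure on $G-v$. We need the degree-adjusted induction hypothesis for $G-v$, in which the degrees of the neighbours drop by one. The inductive target therefore becomes
\[
1+\lambda_v p_v+\mu_v q_v \;\ge\; F_v^{1/(d_v+1)}\prod_{u\in N(v)}\frac{F_u(\lambda_u,\mu_u)^{1/(d_u+1)}}{F'_u(\lambda_u,\mu_u)^{1/d_u}},
\]
where $F'_u$ is $F$ with $d_u$ replaced by $d_u-1$. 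Here $p_v,q_v$ are the probabilities that $N(v)$ avoids $I$, respectively $J$. Following SSSZ, choose $v$ cleverly, or better, average over all $v$: take the product over all $v$ of these single-vertex deletions, or use the ``delete a vertex, apply induction to $G-v$ and to $G-N[v]$'' two-term bound. I would first try the SSSZ route of writing $Z_G\ge$ a combination of $Z_{G-v}$ and the restricted partition functions, then applying induction to each term. The restricted ones are again of the form $Z^{(2)}_{G-v}$ with some coordinates zeroed, so the induction hypothesis still applies to them: zeroing a coordinate is allowed in the multiaffine statement. This is the reason to prove the multiaffine version at all.

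\textbf{Step 2 (reduction to an inequality in $\le 2(d+1)$ variables).} After applying induction to each of the three pieces, the claim reduces to a purely numerical inequality involving $\lambda_v,\mu_v$ and the pairs $(\lambda_u,\mu_u)$ of the neighbours with their degrees. By Hölder/log-convexity, I expect one can reduce to the worst case in which the neighbours' degrees satisfy $d_u\ge$ some bound, or to the case in which all neighbours are equal. Then use a weighted AM--GM: choose $v$ minimising a suitable potential (e.g. $F_v^{1/(d_v+1)}$-weighted), so that neighbour contributions can be compared to $v$'s.

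\textbf{Main obstacle.} The hard step is the resulting numerical inequality. It is an AM--GM-type inequality that is tight only at cliques with equal weights, and it must now hold jointly in $(\lambda,\mu)$ with the cross term $d(d+1)\lambda\mu$. I would try to establish it by taking logarithms, using concavity of $x\mapsto \log F_d(x)/(d+1)$ in appropriate coordinates to handle the neighbours, and then checking a one-variable-per-type inequality by differentiation in $d$. Failing that, I would attempt a direct clique-by-clique averaging, i.e. apply Theorem~\ref{thm:indep-poly-multiaff-lower-bd}-style arguments separately with the $\mu$-variables conditioned.
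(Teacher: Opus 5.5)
Your inductive set-up matches the paper's. You induct on $\abs{V(G)}$, use the three-term recurrence in which the neighbours of the deleted vertex lose one fugacity rather than being deleted, and apply the multiaffine induction hypothesis separately to the three terms. After cancelling the factors of vertices outside the closed neighbourhood, one is left with the local inequality \eqref{eq:semiprop-multiaff-lower-bd-goal-2}.

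Your Step~1 target, with the true Gibbs probabilities $p_v,q_v$, is equivalent to $Z^{(2)}_G/Z^{(2)}_{G-v}\ge Z^{(2)-}_G/Z^{(2)-}_{G-v}$. Induction gives no handle on this, and it is false in general. Take the path $x_1u_1wu_2x_2$ with $\bmu\equiv0$, $\lambda_w=10^6$, $\lambda_{u_1}=\lambda_{u_2}=10^3$ and $\lambda_{x_1}=\lambda_{x_2}=0$; the two sides are about $2$ and $15$.

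You also leave the choice of deleted vertex open and float a fugacity-weighted potential. The local inequality genuinely needs $d_u\le d_w$ for all $u\in N(w)$, i.e.\ $w$ of maximum degree, which is what the paper uses. Without it the inductive step breaks down. With $\bmu\equiv0$ and $w$ a leaf of the star $K_{1,10}$ with centre $u$, the local inequality reads $(1+10\lambda_u)^{1/10}+\lambda_w\ge(1+2\lambda_w)^{1/2}(1+11\lambda_u)^{1/11}$. This is false at $\lambda_u=100$, $\lambda_w=1.3$ (roughly $3.30$ versus $3.59$).

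The main gap is that the local inequality itself, which carries the entire difficulty, is not proved. H\"older/log-convexity, weighted AM--GM and concavity of $\log F_d$ are named as hopes, without saying which object is convex or why the resulting inequality holds. The paper needs three specific ideas here; write $\Delta=d_w$.

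(a) For fixed neighbour data, the left side of \eqref{eq:semiprop-multiaff-lower-bd-goal-2} is affine in $(\lambda_w,\mu_w)$. The right side is a constant times $A_{\Delta+1}(\lambda_w,\mu_w)^{1/(\Delta+1)}$, which is concave by \Cref{lem:Ad+1-concave}. So the claim becomes \Cref{lem:Sn-membership}: a certain coefficient triple lies in the dual set $\calS_\Delta$ of affine majorants of this concave function.

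(b) $\log\calS_\Delta$ is convex. This follows by homogenising $A_{\Delta+1}$ (all coefficients are positive) and applying Cauchy--Schwarz. The triple for $\Delta$ neighbours is the coordinatewise geometric mean of $\Delta$ single-neighbour triples. So this is the precise form of your ``reduce to equal neighbours'' step, and it leaves \Cref{lem:Sn-membership-separately}. That lemma says the tangent-plane triple of $A_{d+1}^{1/(d+1)}$ at $(\lambda,\mu)$, raised coordinatewise to the power $\Delta/d$, lies in $\calS_\Delta$ whenever $d\le\Delta$. This is where the maximum-degree choice is used.

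(c) The single-neighbour statement is the hardest step. The paper bounds the least admissible constant coefficient $\Phi_\Delta(a_1,a_2)$ by $\Psi_\Delta(a_1a_2)+(a_1+a_2)/\Delta$, with equality when $a_1=a_2$. It does so by locating the tangency point (possibly outside the quadrant) and using concavity on a larger convex domain. For the triple $(c_0,c_1,c_2)$ of that lemma, $c_1c_2$ depends only on $B_d(\lambda)B_d(\mu)$. Also, $c_0-(c_1+c_2)/\Delta$ is minimised at $\lambda=\mu$ when this product is fixed. Hence one reduces to $\lambda=\mu$, which is then settled by a one-variable comparison between consecutive degrees $d$ and $d+1$. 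The case $\Delta=1$ is handled separately by AM--GM.

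None of this is in your proposal. Your fallback of reducing to \Cref{thm:indep-poly-multiaff-lower-bd} also tends to lose the exact cross term $d_v(d_v+1)\lambda_v\mu_v$. For instance, the paper's $G\cart K_2$ reduction yields only the weaker factors $(1+(d_v+2)\lambda_v)^{1/(d_v+2)}(1+(d_v+2)\mu_v)^{1/(d_v+2)}$.
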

Although letting $\mu_v=0$ for all $v\in V(G)$ reproves~\Cref{thm:indep-poly-multiaff-lower-bd}, we shall give independent proofs for each of the results. 

As an application of our main results, we give lower bounds slightly weaker than~\eqref{eq:afm} for semiproper colouring partition functions with $q$ proper colours, $q>2$, which can be seen as an `approximate version' of~\Cref{conj:main} for semiproper colourings; see \Cref{thm:semiproper-colouring-poly-multiaff-lower-bd-weak}. Moreover, we also settle the conjecture for the case $\Delta(G)\leq 2$, where \(\Delta(G)\) is the maximum degree of \(G\); see~\Cref{thm:conj-main-deg<=2}.

An interesting feature of~\Cref{thm:indep-poly-multiaff-lower-bd,thm:semiprop-multiaff-lower-bd} is that the key steps in both proofs were obtained, at least in part, by using \Aletheia, a mathematics research agent built upon Gemini Deep Think at Google DeepMind, under the lead of Tony Feng.
We will point out these points of contribution where they arise. 
We also refer to~\cite{aletheia} for more details of the development of {\Aletheia} and its applications to various problems.

We also formalised the proof of~\Cref{thm:semiprop-multiaff-lower-bd} in the Lean~4 proof assistant using Harmonic Aristotle~\cite{achim2025aristotleimolevelautomatedtheorem}. The resulting Lean files are available in the second author's GitHub repository~\cite{seo2026multivariate}.

\section{Multivariate independence polynomials}\label{sec:indep-poly}

The starting point of our proof of~\Cref{thm:indep-poly-multiaff-lower-bd} is the standard vertex-deletion recurrence for the independence polynomial. Namely, for a graph \(G\) and its vertex \(w\), conditioning on whether \(w\) is occupied,
\[
    Z_G(\lambda) = Z_{G\setminus w}(\lambda) + \lambda Z_{G\setminus (\{w\}\cup N(w))}(\lambda),
\]
where \(N(w)\) is the set of neighbours of \(w\) in \(G\). Equivalently, in the multivariate setting, deleting \(N(w)\) is the same as setting \(\lambda_v=0\) for each \(v\in N(w)\).
This recurrence and its variants have been used in several places to bound the number of independent sets~\cite{galvin2011number,sah2019number,sah2020reverse,buys2025triangle}, and it also underlies inductive lower bounds on the independence number \(\alpha(G)\) (see, e.g., \cite{shearer1983note}).

\medskip

Let us formalise this inductive idea. First, both sides of~\eqref{eq:main} factor over connected components of~\(G\), so we may assume that \(G\) is connected.
For a graph \(G\) and \(\blambda=(\lambda_v)_{v\in V(G)}\), let
\[
	Z^-_G(\blambda) \defeq \prod_{v \in V(G)} Z_{K_{d_v+1}}(\lambda_v)^{\frac{1}{d_v+1}}.
\]
Then \Cref{thm:indep-poly-multiaff-lower-bd} states that \(Z_G(\blambda) \ge Z^-_G(\blambda)\) for all \(\blambda\in (\RR_{\ge0})^{V(G)}\).
We use induction on \(\abs{V(G)}\) to prove this inequality, where the base case \(\abs{V(G)}=1\) is clear. 

Let \(w\) be a vertex in \(G\) with maximum degree \(\Delta\). Fix \(\blambda=(\lambda_v)_{v\in V(G)}\in (\RR_{\ge0})^{V(G)}\) and let \(\blambda'\defeq (\lambda_v)_{v\in V(G \setminus w)}\). We begin with the multivariate recursive expression
\[
	Z_G(\blambda) = Z_{G \setminus w}(\blambda') + \lambda_w Z_{G \setminus w}(\blambda')|_{\lambda_v = 0,\, v \in N(w)}.
\]
By the induction hypothesis, it suffices to show that
\begin{equation}\label{eq:indep-multiaff-lower-bd-goal}
	Z^-_{G \setminus w}(\blambda') + \lambda_w Z^-_{G\setminus w}(\blambda')|_{\lambda_v = 0, v \in N(w)}
	\ge Z^-_G(\blambda).
\end{equation}
The definition gives
\[
	Z^-_{G \setminus w}(\blambda')
	= \prod_{v\in N(w)} Z_{K_{d_v}}(\lambda_v)^{\frac{1}{d_v}}
		\prod_{v \in V(G)\setminus (\{w\}\cup N(w))} Z_{K_{d_v+1}}(\lambda_v)^{\frac{1}{d_v+1}},
\]
so after removing the common factor \(\prod\limits_{v\in V(G)\setminus (\{w\}\cup N(w))} Z_{K_{d_v+1}}(\lambda_v)^{\frac{1}{d_v+1}}\), \eqref{eq:indep-multiaff-lower-bd-goal} is equivalent to
\begin{equation}\label{eq:indep-multiaff-lower-bd-goal-2}
	\prod_{v\in N(w)} Z_{K_{d_v}}(\lambda_v)^{\frac{1}{d_v}}
		+ \lambda_w
	\ge Z_{K_{\Delta+1}}(\lambda_w)^{\frac{1}{\Delta+1}}
		\prod_{v\in N(w)} Z_{K_{d_v+1}}(\lambda_v)^{\frac{1}{d_v+1}},
\end{equation}
which is shown by the following \lcnamecref{lem:key-tech-ineq}.
We remark that this result is exactly what {\Aletheia} successfully derived. We integrated this with the correct solutions provided by other models, including Gemini Deep Think 3.0 and its IMO-Gold version, to formulate a cleaner argument.

\begin{lemma}\label{lem:key-tech-ineq}
    Let \(\Delta\ge 1\) be an integer and \(1\le d_1,\dots,d_\Delta\le \Delta\). Let \(\lambda_0,\lambda_1,\dots,\lambda_\Delta\) be nonnegative reals. Then
    \[
        \prod_{i=1}^\Delta (1+d_i \lambda_i)^{\frac{1}{d_i}} + \lambda_0
        \ge (1+(\Delta+1)\lambda_0)^{\frac{1}{\Delta+1}} \prod_{i=1}^\Delta (1+(d_i+1)\lambda_i)^{\frac{1}{d_i+1}}.
    \]
\end{lemma}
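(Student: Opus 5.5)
The plan is to normalise by the left-hand product and reduce everything to a one-parameter family of one-variable estimates. Write \(a_i\defeq(1+d_i\lambda_i)^{1/d_i}\ge 1\), \(b_i\defeq(1+(d_i+1)\lambda_i)^{1/(d_i+1)}\), \(A\defeq\prod_i a_i\) and \(x\defeq\lambda_0\). Since \(d\mapsto(1+d\lambda)^{1/d}\) is nonincreasing, \(r_i\defeq b_i/a_i\le 1\). Substituting \(\lambda_i=(a_i^{d_i}-1)/d_i\) gives the explicit form
\[
    r_i^{d_i+1}=\frac{(d_i+1)a_i^{d_i}-1}{d_i\,a_i^{d_i+1}}\eqdef \phi_{d_i}(a_i).
\]
Dividing the target inequality by \(A\) and raising to the power \(\Delta+1\), the statement of \Cref{lem:key-tech-ineq} becomes
\[
    \Bigl(1+\frac{x}{A}\Bigr)^{\Delta+1}\ \ge\ (1+(\Delta+1)x)\prod_{i=1}^{\Delta}\phi_{d_i}(a_i)^{\frac{\Delta+1}{d_i+1}}.
\]

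\textbf{Step 1.} Remove the dependence on the individual degrees. I will show that for \(1\le d\le\Delta\) and \(a\ge1\),
\[
    \phi_d(a)^{\frac{\Delta+1}{d+1}}\le \phi_\Delta(a^{d/\Delta})^{\dots}
\]
or, more usefully, a bound of the shape \(\phi_d(a)^{(\Delta+1)/(d+1)}\le \psi(a)\) for a single function \(\psi\) independent of \(d\). The natural candidate is \(\psi(a)=\phi_\Delta(a)\), i.e.\ the worst case is \(d=\Delta\). This comparison in \(d\) is a calculus statement: check monotonicity in \(d\) of \(\tfrac{1}{d+1}\log\phi_d(a)\), treating \(d\) as a real variable. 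Intuitively, a smaller degree means \(b_i/a_i\) is closer to \(1\) per unit exponent.

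\textbf{Step 2.} Merge the \(\Delta\) factors into one variable. I want \(\prod_i\phi_\Delta(a_i)\le \phi_\Delta\bigl(A^{1/\Delta}\bigr)^{\Delta}\), or at least a bound depending only on \(A\). This would follow from concavity of \(t\mapsto\log\phi_\Delta(e^t)\) via Jensen, since \(\sum_i\log a_i=\log A\). If Jensen goes the wrong way, I would instead use the cruder bound \(\prod_i\phi_\Delta(a_i)\le\phi_\Delta(A)\), exploiting \(\phi\le1\) and its monotonicity in \(a\). This only needs that \(-\log\phi_\Delta(e^t)\) is superadditive, which should hold because it vanishes at \(t=0\) and is convex.

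\textbf{Step 3 (main obstacle).} It then remains to prove a two-variable inequality in \(x\ge0\) and \(A\ge1\), with \(\Delta\) as a parameter:
\[
    \Bigl(1+\frac{x}{A}\Bigr)^{\Delta+1}\ \ge\ (1+(\Delta+1)x)\,\Phi_\Delta(A),
\]
where \(\Phi_\Delta\) is whatever bound Steps 1 and 2 produce. At \(A=1\) this is Bernoulli's inequality. For fixed \(A\), I would optimise over \(x\): the ratio of the two sides is minimised where
\[
    \frac{\Delta+1}{A}\cdot\frac{1}{1+x/A}=\frac{\Delta+1}{1+(\Delta+1)x},
\]
that is, at \(x=(A-1)/\Delta\). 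Substituting this value turns the claim into a single-variable inequality in \(A\), which I expect to be equivalent to the definition of \(\phi_\Delta\) up to AM--GM. This is exactly where the AM--GM-type nature, with equality only in the clique case, should appear.

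The delicate point is that Steps 1 and 2 must not lose too much, so that the final one-variable inequality is still true. If the product form in Step 2 is too lossy, I would fall back to induction on \(\Delta\) in the number of neighbours. That would mean treating the inequality as increasing one \(\lambda_i\) at a time and differentiating in \(\lambda_i\), to show that the left-hand side minus the right-hand side is nondecreasing.
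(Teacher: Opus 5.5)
Your main line is correct, and it is the paper's proof written in different coordinates.

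\begin{itemize}
\item Minimising over $x=\lambda_0$ at $x=(A-1)/\Delta$ is legitimate because $A\ge1$. This step is equivalent to the paper's AM--GM step, and it leaves exactly \eqref{eq:key-tech-ineq-reduced}, namely $(\Delta+1)A-1\ge\Delta B^{(\Delta+1)/\Delta}$ with $B=\prod_i b_i$.
\item Your Step~1 with $\psi=\phi_\Delta$ reads $\phi_d(a)^{(\Delta+1)/(d+1)}\le\phi_\Delta(a)$. After substituting $a=(1+d\lambda)^{1/d}$, this is exactly the paper's Claim $(\Delta+1)(1+d\lambda)^{\Delta/d}\ge1+\Delta(1+(d+1)\lambda)^{(\Delta+1)/(d+1)}$.
\item Jensen in Step~2 is the paper's inequality $\prod_i(1+y_i)^{1/\Delta}\ge1+\prod_i y_i^{1/\Delta}$, taken with $1+y_i=(\Delta+1)a_i^\Delta$.
\item Moreover, $\phi_\Delta(A^{1/\Delta})^\Delta=((\Delta+1)A-1)^\Delta/(\Delta^\Delta A^{\Delta+1})$ is precisely $\min_{x\ge0}(1+x/A)^{\Delta+1}/(1+(\Delta+1)x)$. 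So with the Jensen form of Step~2, your Step~3 is an identity and loses nothing.
\end{itemize}

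What the proposal does not do is prove either substantive inequality, and one of your fallbacks is wrong.

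\textbf{Step 1.} You only announce a monotonicity-in-$d$ computation for $\frac{1}{d+1}\log\phi_d(a)$; it is awkward and you do not carry it out. The clean route is the paper's: fix $d\le\Delta$ and set
\[
f(\lambda)=(\Delta+1)(1+d\lambda)^{\Delta/d}-\Delta(1+(d+1)\lambda)^{(\Delta+1)/(d+1)}-1 .
\]
Then $f(0)=0$, and
\[
f'(\lambda)=\Delta(\Delta+1)\bigl((1+d\lambda)^{(\Delta-d)/d}-(1+(d+1)\lambda)^{(\Delta-d)/(d+1)}\bigr)\ge0
\]
by Bernoulli's inequality $(1+d\lambda)^{(d+1)/d}\ge1+(d+1)\lambda$.

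\textbf{Step 2.} Jensen does go the right way, but you left this conditional. Write $\log\phi_\Delta(e^t)=\log\bigl((\Delta+1)e^{\Delta t}-1\bigr)-(\Delta+1)t-\log\Delta$. The map $s\mapsto\log(ce^s-1)$ has second derivative $-ce^s/(ce^s-1)^2<0$, so $\log\phi_\Delta(e^t)$ is concave, which is what you need.

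\textbf{The fallback is backwards.} Convexity of $-\log\phi_\Delta(e^t)$, together with its vanishing at $t=0$, gives superadditivity. That means $\prod_i\phi_\Delta(a_i)\ge\phi_\Delta(A)$, the reverse of what you need. Monotonicity also points the wrong way, since $\phi_\Delta$ is decreasing and $a_i\le A$. The `cruder bound' is in fact false: for $\Delta=2$ and $a_1=a_2=2$ one gets $\phi_2(2)^2=121/256>47/128=\phi_2(4)$.
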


\begin{proof}
    Let \(A\defeq \prod_{i=1}^\Delta A_i\) and \(B\defeq \prod_{i=1}^\Delta B_i\) where
    \[
        A_i \defeq (1+d_i\lambda_i)^{\frac{1}{d_i}}
        \quad\text{and}\quad
        B_i \defeq (1+(d_i+1)\lambda_i)^{\frac{1}{d_i+1}}.
    \]
    The desired inequality then rephrases as
    \begin{equation}\label{eq:goal}
        A + \lambda_0 \ge (1 + (\Delta+1)\lambda_0)^{\frac{1}{\Delta+1}} B.
    \end{equation}
    By the AM--GM inequality,
    \begin{align*}
        (1 + (\Delta+1)\lambda_0)^{\frac{1}{\Delta+1}} B
        &= \bigl( (1 + (\Delta+1)\lambda_0) \cdot (B^{\frac{\Delta+1}{\Delta}})^\Delta \bigr)^{\frac{1}{\Delta+1}}\\
        &\le \frac{(1 + (\Delta+1)\lambda_0) + \Delta B^{\frac{\Delta+1}{\Delta}}}{\Delta+1}
        = \frac{1 + \Delta B^{\frac{\Delta+1}{\Delta}}}{\Delta+1} + \lambda_0,
    \end{align*}
    so it suffices to show that
    \begin{equation}\label{eq:key-tech-ineq-reduced}
        (\Delta+1)A \ge \Delta B^{\frac{\Delta+1}{\Delta}} + 1.
    \end{equation}
    The proof of this inequality consists of two technical steps. Firstly, we prove that, for each \(1\le i\le \Delta\),
    \begin{equation}\label{eq:indep-multiaff-lower-bd-component}
         (\Delta+1)A_i^{\Delta}=(\Delta+1)(1+d_i\lambda_i)^{\frac{\Delta}{d_i}} \ge 1+ \Delta(1+(d_i+1)\lambda_i)^{\frac{\Delta+1}{d_i+1}}=1+\Delta B_i^{\Delta+1},
    \end{equation}
    multiplying which for all \(1\le i\le \Delta\) gives
    \begin{equation}\label{eq:key-tech-intermediate}
        (\Delta+1)^\Delta A^\Delta \ge \prod_{i=1}^\Delta (1 + \Delta B_i^{\Delta+1}).
    \end{equation}
    The second step is to show that 
    \begin{equation}\label{eq:key-tech-lem-goal-2}
        \prod_{i=1}^\Delta (1 + \Delta B_i^{\Delta+1})^{\frac{1}{\Delta}}
        \ge
        1+\Delta B^{\frac{\Delta+1}{\Delta}}.
    \end{equation}
    Combining this with~\eqref{eq:key-tech-intermediate} concludes the proof of~\eqref{eq:key-tech-ineq-reduced}.

    To prove the first inequality~\eqref{eq:indep-multiaff-lower-bd-component}, the following claim with \(d=d_i\) and \(\lambda=\lambda_i\) suffices.
    \begin{claim}\label{clm:key-tech-lem-sub-1}
        For \(1\le d\le \Delta\) and \(\lambda\ge0\),
        \[
            (\Delta+1) (1+d\lambda)^{\Delta/d} \ge 1 + \Delta (1+(d+1)\lambda)^{(\Delta+1)/(d+1)}.
        \]
    \end{claim}
    \begin{claimproof}
        Let
        \[
            f(\lambda) \defeq (\Delta+1) (1+d\lambda)^{\Delta/d} - \Delta (1+(d+1)\lambda)^{(\Delta+1)/(d+1)} - 1,
        \]
        then we want to show \(f(\lambda)\ge0\).
        Since \(f(0)=0\), it suffices to show \(f'(\lambda)\ge0\) for all \(\lambda\ge0\). We have
        \begin{align*}
            f'(\lambda)
            &= \Delta(\Delta+1) (1+d\lambda)^{\Delta/d-1} - \Delta(\Delta+1)(1+(d+1)\lambda)^{(\Delta+1)/(d+1)-1}
            \\&= \Delta(\Delta+1) \bigl( (1+d\lambda)^{(\Delta-d)/d} - (1+(d+1)\lambda)^{(\Delta-d)/(d+1)} \bigr),
        \end{align*}
        which is nonnegative if
        \[
            (1+d\lambda)^{(d+1)/d} \ge 1+(d+1)\lambda.
        \]
        This is a consequence of Bernoulli's inequality \((1+x)^\alpha\ge 1+\alpha x\), for \(x\ge0\) and \(\alpha\ge1\).
    \end{claimproof}
    It remains to show~\eqref{eq:key-tech-lem-goal-2}. Let $y_i\defeq\Delta B_i^{\Delta+1}$. Then~\eqref{eq:key-tech-lem-goal-2} is equivalent to 
    \[
        \prod_{i=1}^\Delta (1+y_i)^{1/\Delta} \ge 1+\prod_{i=1}^\Delta y_i^{1/\Delta}.
    \]
    This follows from the AM--GM inequality of the form
    \[
        1
        = \frac{1}{\Delta} \biggl( \sum_{i=1}^\Delta \frac{1}{1+y_i} + \sum_{i=1}^\Delta \frac{y_i}{1+y_i} \biggr)
        \ge \prod_{i=1}^\Delta \frac{1}{(1+y_i)^{1/\Delta}} + \prod_{i=1}^\Delta \frac{y_i^{1/\Delta}}{(1+y_i)^{1/\Delta}},
    \]
     Indeed, multiplying $\prod_{i=1}^\Delta (1+y_i)^{1/\Delta}$ on both sides concludes the proof.
\end{proof}

\begin{remark}
    It is not hard to see that equality in~\Cref{lem:key-tech-ineq} holds if and only if either
    \begin{enumerate}[(i)]
        \item $\lambda_0=\lambda_1=\dots=\lambda_\Delta=0$ or
        \item $\lambda_0=\lambda_1=\dots=\lambda_\Delta\neq 0$ and $d_1=\dots=d_\Delta =\Delta$.
    \end{enumerate}
    Using this, it follows that equality in~\eqref{eq:main} holds if and only if for each connected component \(C\) of \(G\),
    \begin{enumerate}[(a)]
        \item $\lambda_v=0$ for all $v\in C$ or
        \item $\lambda_v=\lambda\neq 0$ for all $v\in C$ and $G[C]$ is a clique.
    \end{enumerate}
    To see this, we use induction on \(\abs{V(G)}\), where the base case is clear. We may assume that \(G\) is connected. Suppose equality in~\eqref{eq:main} holds so that \(Z_G(\blambda) = Z^-_G(\blambda)\). Recall that \(Z_G(\blambda) \ge Z^-_G(\blambda)\) follows from
    \begin{align}
        Z_G(\blambda)
        &= Z_{G \setminus w}(\blambda') + \lambda_w Z_{G \setminus w}(\blambda')|_{\lambda_v = 0,\, v \in N(w)}
            \nonumber
        \\&\ge Z^-_{G \setminus w}(\blambda') + \lambda_w Z^-_{G \setminus w}(\blambda')|_{\lambda_v = 0,\, v \in N(w)}
            \label{eq:equality-cond-ineq-1}
        \\&\ge Z^-_G(\blambda),
            \label{eq:equality-cond-ineq-2}
    \end{align}
    and that \eqref{eq:equality-cond-ineq-2} is equivalent to \eqref{eq:indep-multiaff-lower-bd-goal-2}, where the latter holds by~\Cref{lem:key-tech-ineq}.
    Equality occurs in~\eqref{eq:equality-cond-ineq-1} if and only if both of the following hold:
    \begin{enumerate}[label=(A\arabic*)]
        \item \(Z_{G \setminus w}(\blambda') = Z^-_{G \setminus w}(\blambda')\).
        \label{item:eq-cond-A1}
        \item \(\lambda_w=0\) or \(Z^-_{G \setminus w}(\blambda')|_{\lambda_v = 0,\, v \in N(w)} = Z_{G \setminus w}(\blambda')|_{\lambda_v = 0,\, v \in N(w)}\).
        \label{item:eq-cond-A2}
    \end{enumerate}
    \ref{item:eq-cond-A1} occurs, by the induction hypothesis, only if \(\lambda_v\) is constant on each connected component of \(G \setminus w\).
    In addition, \ref{item:eq-cond-A2} occurs, again by the induction hypothesis, if and only if either
    \begin{enumerate*}[label=(B\arabic*)]
        \item \(\lambda_w=0\) or
        \label{item:eq-cond-B1}
        \item \(\lambda_v=0\) for all \(v\in V(G)\setminus  (\{w\}\cup N(w))\).
        \label{item:eq-cond-B2}
    \end{enumerate*}
    
    Equality occurs in~\eqref{eq:equality-cond-ineq-2}, by the equality case of~\eqref{eq:indep-multiaff-lower-bd-goal-2}, if and only if either
    \begin{enumerate}[label=(C\arabic*)]
        \item \(\lambda_w=\lambda_v=0\) for all \(v\in N(w)\) or
        \label{item:eq-cond-C1}
        \item \(\lambda_w=\lambda_v\) and \(d_v=\Delta\) for all \(v\in N(w)\).
        \label{item:eq-cond-C2}
    \end{enumerate}
    In either of the cases, since \(G\) is connected and \(\lambda_v\) is constant on each connected component of \(G \setminus w\), \(\lambda_v\) is constant on \(V(G)\).
    If~\ref{item:eq-cond-C1} occurs, \(\lambda_v=0\) for all \(v\in V(G)\).
    Otherwise, suppose \ref{item:eq-cond-C2} occurs but \(\lambda_w=\lambda_v\neq 0\) for all \(v\in N(w)\). Then \ref{item:eq-cond-B1} is not the case, so \ref{item:eq-cond-B2} must occur. This implies that \(V(G)\setminus (\{w\}\cup N(w))\) is empty, since otherwise \(\lambda_v=0\) for all \(v\in V(G)\). Thus, \(V(G) = \{w\}\cup N(w)\), and since \(d_v=\Delta\) for all \(v\in N(w)\), \(G\) must be a clique.
\end{remark}

\section{Semiproper colourings}\label{sec:semiproper}
In our proof of \Cref{thm:semiprop-multiaff-lower-bd}, we used {\Aletheia} in a more complex and rather interactive way than in the proof of~\Cref{thm:indep-poly-multiaff-lower-bd}, where we essentially outsourced one key lemma. Because of this, we shall first give a full proof and then discuss how the AI agent helped along the way in the remark at the very end of this section.

To prove \Cref{thm:semiprop-multiaff-lower-bd}, we may assume that \(G\) is connected, analogously to the proof of~\Cref{thm:indep-poly-multiaff-lower-bd}. For a graph \(G\) and \(\blambda=(\lambda_v)_{v\in V(G)}\), \(\bmu=(\mu_v)_{v\in V(G)}\), let
\[
    Z_G^{(2)-}(\blambda,\bmu) \defeq \prod_{v\in V(G)} Z^{(2)}_{K_{d_v+1}}(\lambda_v,\mu_v)^{\frac{1}{d_v+1}}.
\]
Then \Cref{thm:semiprop-multiaff-lower-bd} states that \(Z^{(2)}_G(\blambda,\bmu)\ge Z_G^{(2)-}(\blambda,\bmu)\) for all \(\blambda,\bmu\in (\RR_{\ge0})^{V(G)}\). To prove this inequality, we again use the idea of `conditioning on the state assigned to \(w\)' to obtain a recurrence. 

To elaborate, we proceed by induction on \(\abs{V(G)}\), where the base case \(\abs{V(G)}=1\) holds clearly.
Let~\(w\) be a vertex in \(G\) with maximum degree \(\Delta\). Fix \(\blambda,\bmu\in (\RR_{\ge0})^{V(G)}\) and let \(\blambda'\defeq (\lambda_v)_{v\in V(G\setminus w)}\) and \(\bmu'\defeq (\mu_v)_{v\in V(G\setminus w)}\). The recurrence relation obtained by fixing the image of $w$ in $K_3^\circ$ is
\[
    Z^{(2)}_G(\blambda,\bmu)
    = Z^{(2)}_{G\setminus w}(\blambda',\bmu')
        + \lambda_w Z^{(2)}_{G\setminus w}(\blambda',\bmu')|_{\lambda_v=0,\, v\in N(w)}
        + \mu_w Z^{(2)}_{G\setminus w}(\blambda',\bmu')|_{\mu_v=0,\, v\in N(w)}.
\]
Using the induction hypothesis, it suffices to show that
\begin{equation}\label{eq:semiprop-multiaff-lower-bd-goal}
    Z^{(2)-}_{G\setminus w}(\blambda',\bmu')
        + \lambda_w Z^{(2)-}_{G\setminus w}(\blambda',\bmu')|_{\lambda_v=0,\, v\in N(w)}
        + \mu_w Z^{(2)-}_{G\setminus w}(\blambda',\bmu')|_{\mu_v=0,\, v\in N(w)}
    \ge Z^{(2)-}_G(\blambda,\bmu).
\end{equation}
By definition,
\[
	Z^{(2)-}_{G \setminus w}(\blambda',\bmu')
	= \prod_{v\in N(w)} Z^{(2)}_{K_{d_v}}(\lambda_v,\mu_v)^{\frac{1}{d_v}}
		\prod_{v \in V(G)\setminus (\{w\}\cup N(w))} Z^{(2)}_{K_{d_v+1}}(\lambda_v,\mu_v)^{\frac{1}{d_v+1}},
\]
so after removing the common factors \(Z^{(2)}_{K_{d_v+1}}(\lambda_v,\mu_v)^{\frac{1}{d_v+1}}\) for \(v\in V(G)\setminus (\{w\}\cup N(w))\), \eqref{eq:semiprop-multiaff-lower-bd-goal} becomes
\begin{multline}\label{eq:semiprop-multiaff-lower-bd-goal-2}
    \prod_{v\in N(w)} A_{d_v}(\lambda_v,\mu_v)^{\frac{1}{d_v}}
        + \lambda_w \prod_{v\in N(w)} B_{d_v}(\mu_v)^{\frac{1}{d_v}}
        + \mu_w \prod_{v\in N(w)} B_{d_v}(\lambda_v)^{\frac{1}{d_v}} \\
    \ge A_{\Delta+1}(\lambda_w,\mu_w)^{\frac{1}{\Delta+1}} \prod_{v\in N(w)} A_{d_v+1}(\lambda_v,\mu_v)^{\frac{1}{d_v+1}},
\end{multline}
where for \(d\ge1\) and \(\lambda,\mu\ge0\),
\begin{align*}
    A_d(\lambda,\mu) &\defeq d(d-1)\lambda\mu + d(\lambda+\mu) + 1
        = Z^{(2)}_{K_d}(\lambda,\mu)
    \quad\text{and} \\
    B_d(\lambda) &\defeq d\lambda+1
        = Z^{(2)}_{K_d}(\lambda,0) = Z_{K_d}(\lambda).
\end{align*}

It is convenient to treat the case $\Delta=1$ separately, and then assume $\Delta\geq 2$.
Suppose $\Delta=1$ and let \(v\) be the unique neighbour of \(w\). Since \(d_v=\Delta=1\), \eqref{eq:semiprop-multiaff-lower-bd-goal-2} is written as
\begin{equation}\label{eq:semiprop-multiaff-lower-bd-goal-2-Delta=1}
    A_{d_v}(\lambda_v,\mu_v) + \lambda_w B_{d_v}(\mu_v) + \mu_w B_{d_v}(\lambda_v) \ge A_2(\lambda_w,\mu_w)^{1/2} A_2(\lambda_v,\mu_v)^{1/2}.
\end{equation}
Let \(x_1\defeq \lambda_w+1\), \(y_1\defeq \mu_w+1\), \(x_2\defeq \lambda_v+1\), and \(y_2\defeq \mu_v+1\). Then \eqref{eq:semiprop-multiaff-lower-bd-goal-2-Delta=1} is equivalent to
\[
    x_1y_2 + y_1x_2 - 1 \ge (2x_1y_1-1)^{1/2} (2x_2y_2-1)^{1/2}.
\]
The AM--GM inequality gives
\[
    x_1y_2 + y_1x_2 - 1 \ge 2(x_1y_2y_1x_2)^{1/2} - 1.
\]
Substituting \(x=(x_1y_1)^{1/2}\) and \(y=(x_2y_2)^{1/2}\) into \((2xy-1)^2 - (2x^2-1)(2y^2-1) = 2(x-y)^2 \ge0\) gives
\[
    2(x_1y_1x_2y_2)^{1/2} - 1 \ge (2x_1y_1 - 1)^{1/2} (2x_2y_2 - 1)^{1/2}
\]
and hence \eqref{eq:semiprop-multiaff-lower-bd-goal-2-Delta=1} holds. Therefore, in what follows, we assume $\Delta\geq 2$.

\subsection{The dual set \texorpdfstring{\(\calS_d\)}{S\_d} and log-convexity}
For \(d\ge2\), let
\[
    \calS_d \defeq \{(a_0,a_1,a_2)\in(\RR_{>0})^3 : a_0+a_1x+a_2y\ge A_{d+1}(x,y)^{\frac{1}{d+1}}\ \forall x,y\ge0\}.
\]
One good reason to look into this set is that $A_{d+1}(x,y)^{\frac{1}{d+1}}$ is concave, which means $\calS_d$ may be viewed as a `dual' of it. Concavity of $A_{d+1}^{\frac{1}{d+1}}$ is straightforward to check.

\begin{lemma}\label{lem:Ad+1-concave}
	For \(d\ge2\), \(A_{d+1}(x,y)^{\frac{1}{d+1}}\) is concave on any convex subset of \(\{(x,y)\in\RR^2 : A_{d+1}(x,y)>0\}\).
\end{lemma}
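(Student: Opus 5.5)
The plan is to write $A_{d+1}(x,y)=d(d+1)xy+(d+1)(x+y)+1$ as a quadratic form and use the standard criterion that a positive power $Q^{1/m}$, $m\ge2$, of a quadratic $Q$ with Lorentzian signature is concave on convex sets where $Q>0$. Concretely, shift variables: with $n\defeq d+1$ and $c\defeq d(d+1)=n(n-1)$,
\[
    A_{d+1}(x,y)=c\Bigl(x+\tfrac{n}{c}\Bigr)\Bigl(y+\tfrac{n}{c}\Bigr)+1-\tfrac{n^2}{c}
    = c\,XY-\tfrac{1}{n-1},
\]
where $X\defeq x+\frac1{n-1}$ and $Y\defeq y+\frac1{n-1}$. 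Since affine substitutions preserve concavity, it suffices to show that $g(X,Y)\defeq (cXY-\kappa)^{1/n}$ with $\kappa\defeq\frac1{n-1}>0$ is concave on convex subsets of $\{cXY>\kappa\}$. That set has two components, $X,Y>0$ and $X,Y<0$. A convex subset lies in one of them, and by the symmetry $(X,Y)\mapsto(-X,-Y)$ we may assume $X,Y>0$.

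Next, compute the Hessian of $g=Q^{1/n}$ with $Q=cXY-\kappa$. We have
\[
    \nabla^2 g=\tfrac1n Q^{1/n-2}\Bigl(Q\nabla^2Q-\bigl(1-\tfrac1n\bigr)\nabla Q\nabla Q^{\mathsf T}\Bigr).
\]
Denote the bracket by $M$. Here $\nabla Q=(cY,cX)$ and $\nabla^2Q=\begin{pmatrix}0&c\\c&0\end{pmatrix}$. The diagonal entries of $M$ are $-(1-\frac1n)c^2Y^2\le0$ and $-(1-\frac1n)c^2X^2\le0$, so concavity reduces to $\det M\ge0$.

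The determinant has the form
\[
    \det M=\alpha^2c^4X^2Y^2-\bigl(cQ-\alpha c^2XY\bigr)^2,\qquad \alpha=1-\tfrac1n.
\]
Writing $P\defeq cXY>\kappa>0$ and $Q=P-\kappa$, this factors as
\[
    \det M = c^2\bigl(\alpha P-(Q-\alpha P)\bigr)\bigl(\alpha P+(Q-\alpha P)\bigr)=c^2\bigl(2\alpha P-Q\bigr)\,Q.
\]
We know $Q>0$, and $2\alpha P-Q=(2\alpha-1)P+\kappa>0$ because $\alpha\ge\frac12$ when $n\ge2$. Hence $M\preceq0$, so $g$ is concave.

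The only step needing care is this determinant sign check; it is the main obstacle, though a mild one. It is where $d\ge2$ (indeed $n\ge2$ suffices) and the negative constant $-\kappa$ enter. One could alternatively cite the general fact that $Q^{1/2}$ is concave on a component of the positive cone of a Lorentzian form, and that $t\mapsto t^{2/n}$ is concave and increasing. However, the constant term makes $Q$ inhomogeneous, so the direct Hessian computation above is the cleaner route.
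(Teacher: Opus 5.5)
Your proof is correct and is essentially the paper's argument: both compute the Hessian of $A_{d+1}(x,y)^{\frac{1}{d+1}}$, note that its diagonal entries are nonpositive, and show that its determinant is positive. Your factor $2\alpha P-Q=\frac{(d-1)A_{d+1}+2}{d+1}$ is exactly the paper's $(d-1)(dxy+x+y)+1$. The shift to $cXY-\kappa$ and the reduction to the component $X,Y>0$ are only cosmetic, and the reduction is unnecessary, since your determinant bound uses only $P>\kappa>0$, which holds on all of $\{Q>0\}$.
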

\begin{proof}
    Let \(\Omega\) be a convex subset of \(\{(x,y)\in\RR^2 : A_{d+1}(x,y)>0\}\). A direct calculation gives
    \begin{align}
        \partial_{xx} \bigl( A_{d+1}(x,y)^{\frac{1}{d+1}} \bigr)
        &= -d(dy+1)^2 A_{d+1}(x,y)^{\frac{1}{d+1}-2} \le0,
        \label{eq:Ad+1-concave-pdiff-xx} \\
        \partial_{yy} \bigl( A_{d+1}(x,y)^{\frac{1}{d+1}} \bigr)
        &= -d(dx+1)^2 A_{d+1}(x,y)^{\frac{1}{d+1}-2} \le0,
        \label{eq:Ad+1-concave-pdiff-yy} \\
        \partial_{xy} \bigl( A_{d+1}(x,y)^{\frac{1}{d+1}} \bigr)
        &= d(dxy+x+y) A_{d+1}(x,y)^{\frac{1}{d+1}-2}. \nonumber
    \end{align}
    Then the Hessian determinant of \(A_{d+1}(x,y)^{\frac{1}{d+1}}\) is
    \[
        d^2 A_{d+1}(x,y)^{\frac{2}{d+1}-3} \bigl( (d-1)(dxy+x+y) + 1 \bigr).
    \]
    Since \(A_{d+1}(x,y) = (d+1)(dxy+x+y)+1 > 0\) on \(\Omega\),
    \[
        (d-1)(dxy+x+y) + 1 > -\frac{d-1}{d+1} + 1 > 0,
    \]
    so \(A_{d+1}(x,y)^{\frac{1}{d+1}}\) has a positive Hessian determinant on \(\Omega\). Together with \eqref{eq:Ad+1-concave-pdiff-xx} and \eqref{eq:Ad+1-concave-pdiff-yy}, we conclude that \(A_{d+1}(x,y)^{\frac{1}{d+1}}\) is concave on \(\Omega\).
\end{proof}

Now our goal \eqref{eq:semiprop-multiaff-lower-bd-goal-2} is equivalent to the following \lcnamecref{lem:Sn-membership}.
\begin{lemma}\label{lem:Sn-membership}
    Let \(\Delta\ge2\) and \(1\le d_i\le \Delta\), \(1\le i\le \Delta\) be integers. Let \(\lambda_i,\mu_i\ge0\) for \(1\le i\le \Delta\). Then
    \[
        \biggl(
        \prod_{i=1}^\Delta \frac{A_{d_i}(\lambda_i,\mu_i)^{\frac{1}{d_i}}}{A_{d_i+1}(\lambda_i,\mu_i)^{\frac{1}{d_i+1}}},\,
        \prod_{i=1}^\Delta \frac{B_{d_i}(\mu_i)^{\frac{1}{d_i}}}{A_{d_i+1}(\lambda_i,\mu_i)^{\frac{1}{d_i+1}}},\,
        \prod_{i=1}^\Delta \frac{B_{d_i}(\lambda_i)^{\frac{1}{d_i}}}{A_{d_i+1}(\lambda_i,\mu_i)^{\frac{1}{d_i+1}}}
        \biggr) \in \calS_\Delta.
    \]
\end{lemma}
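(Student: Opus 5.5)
The plan is to split the membership in \Cref{lem:Sn-membership} into two independent facts: a log-convexity property of \(\calS_\Delta\), and a single-factor membership statement. Write the claimed vector as a coordinatewise geometric mean \(\prod_{i=1}^\Delta (c^{(i)})^{1/\Delta}\), where
\[
    c^{(i)} \defeq \biggl( \frac{A_{d_i}(\lambda_i,\mu_i)^{\frac{\Delta}{d_i}}}{A_{d_i+1}(\lambda_i,\mu_i)^{\frac{\Delta}{d_i+1}}},\,
    \frac{B_{d_i}(\mu_i)^{\frac{\Delta}{d_i}}}{A_{d_i+1}(\lambda_i,\mu_i)^{\frac{\Delta}{d_i+1}}},\,
    \frac{B_{d_i}(\lambda_i)^{\frac{\Delta}{d_i}}}{A_{d_i+1}(\lambda_i,\mu_i)^{\frac{\Delta}{d_i+1}}} \biggr).
\]
It then suffices to show two things:
\begin{enumerate*}
    \item \(\log \calS_\Delta \defeq \{(\log a_0,\log a_1,\log a_2) : a\in\calS_\Delta\}\) is convex, i.e.\ \(\calS_\Delta\) is closed under coordinatewise weighted geometric means;
    \item each \(c^{(i)}\in\calS_\Delta\).
\end{enumerate*}
This mirrors the proof of \Cref{lem:key-tech-ineq}. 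There, \Cref{clm:key-tech-lem-sub-1} handled one neighbour at a time, and the AM--GM step \eqref{eq:key-tech-lem-goal-2} glued the factors together.

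For the log-convexity step, a direct Hölder/AM--GM argument goes the wrong way, because superadditivity of the geometric mean bounds \(G_0+G_1x+G_2y\) from above. I would therefore use duality instead. Since \(A_{\Delta+1}^{1/(\Delta+1)}\) is concave on the relevant region by \Cref{lem:Ad+1-concave}, membership \(a\in\calS_\Delta\) is equivalent to \(a_0\ge \varphi(a_1,a_2)\), where
\[
    \varphi(a_1,a_2)\defeq \sup_{x,y\ge0}\bigl(A_{\Delta+1}(x,y)^{\frac{1}{\Delta+1}}-a_1x-a_2y\bigr).
\]
Log-convexity of \(\calS_\Delta\) is then equivalent to convexity of \(\psi(s,t)\defeq\log\varphi(e^s,e^t)\). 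To compute \(\varphi\), shift \(X=x+1/\Delta\) and \(Y=y+1/\Delta\). This gives \(A_{\Delta+1}=\Delta(\Delta+1)XY-\tfrac{1}{\Delta}\), so the objective has a quasi-homogeneous structure: scaling \(X\mapsto \theta X\), \(Y\mapsto \theta^{-1}Y\) trades \(a_1\) against \(a_2\). As a result, the supremum should depend essentially on \(\sqrt{a_1a_2}\), together with boundary effects from the constraints \(x,y\ge 0\). I expect \(\psi\) to reduce to a one-variable function of \(s+t\), plus corner cases where the optimum sits at \(x=0\) or \(y=0\). In those cases the problem becomes the univariate one already handled in \Cref{sec:indep-poly}. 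Convexity would then be checked by a one-variable derivative computation.

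For the single-factor membership, I need, for all \(x,y\ge0\),
\[
    A_d^{\Delta/d} + B_d(\mu)^{\Delta/d}x + B_d(\lambda)^{\Delta/d}y \ge A_{d+1}^{\Delta/(d+1)}A_{\Delta+1}(x,y)^{1/(\Delta+1)},
\]
with \(1\le d\le\Delta\), where \(A_d\) and \(A_{d+1}\) are evaluated at \((\lambda,\mu)\). I would first apply the weighted AM--GM step from the proof of \Cref{lem:key-tech-ineq}, bounding the right-hand side by
\[
    \frac{A_{\Delta+1}(x,y)+\Delta A_{d+1}^{(\Delta+1)/(d+1)\cdot 1/\Delta\cdot\Delta}}{\Delta+1}
\]
in an appropriately split form. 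This is not linear in \((x,y)\) because of the \(xy\) term, so instead I would use concavity: it suffices to check the inequality at the optimal \((x,y)\) given by the dual description above. Alternatively, one can reduce to showing that the coefficient vector dominates the tangent plane of \(A_{\Delta+1}^{1/(\Delta+1)}\) at a suitably chosen point, namely \((x,y)=(\lambda,\mu)\), where equality is expected when \(d=\Delta\). I would then prove the resulting two-variable inequality in \(\lambda,\mu\) by differentiating in an auxiliary parameter from \(0\), in the manner of \Cref{clm:key-tech-lem-sub-1}, using Bernoulli's inequality.

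I expect the main obstacle to be the log-convexity of \(\calS_\Delta\). The explicit form of \(\varphi\) has several regimes, depending on whether the optimiser is interior or on an axis. Convexity of \(\psi\) must be verified across the transitions between these regimes, which requires checking that \(\psi\) is \(C^1\) across them and convex on each piece.
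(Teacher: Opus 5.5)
Your top-level reduction matches the paper's. The vector is the coordinatewise geometric mean of your $c^{(i)}$, so the lemma follows from convexity of $\log\calS_\Delta$ (\Cref{lem:Sd-log-convex}) and $c^{(i)}\in\calS_\Delta$ (\Cref{lem:Sn-membership-separately}). But all the work lies in these two ingredients, and your plans for both have genuine gaps.

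\textbf{Log-convexity.} You dismiss the direct argument too quickly: it works once you homogenise. Let $h(w,x,y)\defeq w^{\Delta+1}A_{\Delta+1}(x/w,y/w)$. This is a polynomial with positive coefficients that vanishes at $w=0$ (as $\Delta\ge2$), and $a\in\calS_\Delta$ iff $a_0w+a_1x+a_2y\ge h(w,x,y)^{1/(\Delta+1)}$ for all $w,x,y\ge0$. For $c\defeq(\sqrt{a_0b_0},\sqrt{a_1b_1},\sqrt{a_2b_2})$, test $a$ at $\mathbf{u}_1\defeq(\sqrt{b_0/a_0}\,w,\sqrt{b_1/a_1}\,x,\sqrt{b_2/a_2}\,y)$ and $b$ at the reciprocally rescaled point $\mathbf{u}_2$. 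Both linear forms then equal $c_0w+c_1x+c_2y$. Since $(w,x,y)$ is the coordinatewise geometric mean of $\mathbf{u}_1,\mathbf{u}_2$, Cauchy--Schwarz over the monomials of $h$ gives $h(w,x,y)^2\le h(\mathbf{u}_1)h(\mathbf{u}_2)$. The extra variable $w$ is what lets the constant coefficient be rescaled too.

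Your dual route instead rests on a wrong expectation. The shift $X=x+1/\Delta$, $Y=y+1/\Delta$ produces the additive term $(a_1+a_2)/\Delta$. Wherever the unconstrained maximiser lies in the quadrant, $\varphi(a_1,a_2)=\Psi_\Delta(a_1a_2)+(a_1+a_2)/\Delta$ (cf.\ \Cref{lem:Phi-upper-bound}), so $\psi$ is \emph{not} a function of $s+t$. You would need a genuine two-variable convexity check on each regime, plus the gluing. The obvious shortcut (a sum of log-convex terms) is unavailable, because $u\mapsto\log\Psi_\Delta(e^u)$ is not convex: for $\Delta=2$, $\Psi_2=(\xi^3-1)/(3\xi^2)\to0$ as $a_1a_2\to1^-$.

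\textbf{Single-factor membership.} Checking the inequality at the dual optimiser merely restates $c_0\ge\varphi(c_1,c_2)$. Your concrete alternative---that $c^{(i)}$ dominates the tangent plane of $A_{\Delta+1}^{1/(\Delta+1)}$ at $(\lambda,\mu)$---is false. Take $d=1$, $\Delta=2$, $\lambda=\mu=0.1$. The $x$-coefficient of $c^{(i)}$ is $B_1(0.1)^2/A_2(0.1,0.1)=1.21/1.42\approx0.8521$. The tangent plane at $(0.1,0.1)$ has $x$-coefficient $B_2(0.1)A_3(0.1,0.1)^{-2/3}=1.2\cdot1.66^{-2/3}\approx0.8559$, which is larger.

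Membership is witnessed by a tangent plane at a different point. In the symmetric case the paper picks $\alpha$ with $H_{d+1}(\alpha)^{1/(d+1)}=H_d(\lambda)^{1/d}$ (here $\alpha\approx0.104$). It proves coordinatewise domination for the adjacent step $\Delta=d+1$, using $\partial_s\log R_k(s)=-s^{k-1}/(s^k+2x_k(s))$ and the inequality $x_{d+1}(s)\le s\,x_d(s)$, and then iterates in $d$.

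For $\lambda\neq\mu$ you also need a reduction to $\lambda=\mu$, which your sketch does not address. The paper obtains it from exactly the quasi-homogeneity you noticed:
\begin{enumerate*}[label=(\alph*)]
\item $\Phi_\Delta(w_1,w_2)\le\Psi_\Delta(w_1w_2)+(w_1+w_2)/\Delta$, with equality on the diagonal;
\item $w_1w_2$ depends only on $BC$, and is $<1$ by \Cref{prop:basic-ineq} unless $\lambda=\mu=0$;
\item $w_0-(w_1+w_2)/\Delta$ is minimised at $B=C$ for fixed $BC$.
\end{enumerate*}
So your duality computation belongs in this step, not in the log-convexity step. A Bernoulli-type argument as in \Cref{clm:key-tech-lem-sub-1} does not by itself supply either the right tangent point or the symmetrisation.
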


Our plan is to show that \(\log\calS_\Delta \defeq \{(\log a_0,\log a_1,\log a_2):(a_0,a_1,a_2)\in \calS_\Delta\}\) is convex or, equivalently, for all \((a_0,a_1,a_2), (b_0,b_1,b_2)\in \calS_\Delta\), $( \sqrt{a_0 b_0}, \sqrt{a_1 b_1}, \sqrt{a_2 b_2}) \in \calS_\Delta$; see~\Cref{lem:Sd-log-convex}.
Then \Cref{lem:Sn-membership} is a consequence of the following \lcnamecref{lem:Sn-membership-separately} with \((d,\lambda,\mu) = (d_i,\lambda_i,\mu_i)\) for all \(1\le i\le \Delta\), together with convexity of \(\log\calS_\Delta\), which allows us to take coordinatewise geometric mean.

\begin{lemma}\label{lem:Sn-membership-separately}
    Let \(\Delta\ge2\) and \(1\le d\le \Delta\) be integers. Let \(\lambda,\mu\ge0\). Then
    \[
        \biggl(
        \frac{A_{d}(\lambda,\mu)^{\frac{\Delta}{d}}}{A_{d+1}(\lambda,\mu)^{\frac{\Delta}{d+1}}},\,
        \frac{B_{d}(\mu)^{\frac{\Delta}{d}}}{A_{d+1}(\lambda,\mu)^{\frac{\Delta}{d+1}}},\,
        \frac{B_{d}(\lambda)^{\frac{\Delta}{d}}}{A_{d+1}(\lambda,\mu)^{\frac{\Delta}{d+1}}}
        \biggr) \in \calS_\Delta.
    \]
\end{lemma}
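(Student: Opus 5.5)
Unwinding the definition of \(\calS_\Delta\), we must show that for all \(x,y\ge0\)
\[
    A_d(\lambda,\mu)^{\frac{\Delta}{d}} + x\,B_d(\mu)^{\frac{\Delta}{d}} + y\,B_d(\lambda)^{\frac{\Delta}{d}}
    \ge A_{d+1}(\lambda,\mu)^{\frac{\Delta}{d+1}}\,A_{\Delta+1}(x,y)^{\frac{1}{\Delta+1}}.
\]
Here \(x,y\) play the role of \((\lambda_w,\mu_w)\). We mimic the proof of \Cref{lem:key-tech-ineq}. First we remove \(x,y\) from the right-hand side with a tangent-plane bound, which is available because \(A_{\Delta+1}^{1/(\Delta+1)}\) is concave (\Cref{lem:Ad+1-concave}). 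Concretely, weighted AM--GM gives
\[
    A_{\Delta+1}(x,y)^{\frac{1}{\Delta+1}}C^{\Delta} \le \frac{A_{\Delta+1}(x,y)+\Delta C^{\Delta+1}}{\Delta+1}
\]
with \(C=A_{d+1}(\lambda,\mu)^{1/(d+1)}\). The trouble is that \(A_{\Delta+1}\) contains the cross term \(\Delta(\Delta+1)xy\), whereas the left-hand side is linear in \(x,y\). So this crude AM--GM fails, and we instead use the supporting plane of the concave function \(g(x,y)=A_{\Delta+1}(x,y)^{1/(\Delta+1)}\) at a well-chosen point \((x_0,y_0)\):
\[
    g(x,y)\le g(x_0,y_0)+\partial_x g(x_0,y_0)(x-x_0)+\partial_y g(x_0,y_0)(y-y_0).
\]
It then suffices to choose \((x_0,y_0)\ge 0\) (a point where \(g\) is concave) such that, after multiplying by \(C^\Delta\), the coefficients of \(x\) and \(y\) are dominated by \(B_d(\mu)^{\Delta/d}\) and \(B_d(\lambda)^{\Delta/d}\), and the constant term is dominated by \(A_d(\lambda,\mu)^{\Delta/d}\).

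The natural choice is \(x_0=\lambda\), \(y_0=\mu\), motivated by the equality case \(d=\Delta\) with \(w\) and its neighbours sharing parameters. Direct computation gives \(\partial_x g=(\Delta y+1)A_{\Delta+1}^{1/(\Delta+1)-1}\), and symmetrically for \(\partial_y g\). The required inequalities then become three one-dimensional-type comparisons in \((\lambda,\mu)\):
\[
    (\Delta\mu+1)A_{\Delta+1}(\lambda,\mu)^{-\frac{\Delta}{\Delta+1}}A_{d+1}(\lambda,\mu)^{\frac{\Delta}{d+1}}\le(1+d\mu)^{\frac{\Delta}{d}},
\]
its mirror image with \(\lambda\leftrightarrow\mu\), and a constant-term inequality involving \(A_d^{\Delta/d}\). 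When \(d=\Delta\) these hold with equality or reduce to simple identities. For general \(d\le\Delta\) we plan to prove them as in \Cref{clm:key-tech-lem-sub-1}: view each as a function of the exponent parameter, or differentiate in \(\lambda\) (with \(\mu\) fixed) and compare. Bernoulli-type facts, such as \(A_d^{(d+1)/d}\ge A_{d+1}\) and \((1+d\mu)^{(d+1)/d}\ge 1+(d+1)\mu\), should supply the monotonicity.

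If the point \((\lambda,\mu)\) does not work uniformly, we fall back on optimising over \((x_0,y_0)\). The constraint is then that the constant term is at most \(A_d^{\Delta/d}\) after fixing the slopes to equal exactly \(B_d(\mu)^{\Delta/d}/C^\Delta\) and \(B_d(\lambda)^{\Delta/d}/C^\Delta\). This amounts to computing the Legendre-type dual of \(g\), namely
\[
    \sup_{x,y\ge0}\bigl(g(x,y)-px-qy\bigr)
\]
in closed form (the maximiser solves a quadratic system) and comparing it with the constant term.

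The main obstacle is the constant-term inequality for general \(d<\Delta\). It mixes the exponents \(\Delta/d\), \(\Delta/(d+1)\), \(\Delta/(\Delta+1)\) with the bivariate polynomials \(A_d\), \(A_{d+1}\), \(A_{\Delta+1}\). We would first try to reduce it to the univariate case via the substitution \(s=dxy+x+y\)-type invariants. Failing that, we would prove monotonicity in \(\Delta\), treating \(\Delta\) as a real parameter and starting from the equality case \(\Delta=d\), in the spirit of \Cref{clm:key-tech-lem-sub-1}.
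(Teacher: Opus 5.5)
\textbf{There is a genuine gap: your primary route provably fails for $d<\Delta$, and your fallback restates the lemma without supplying the ideas needed to prove it.}

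The tangent plane of $g=A_{\Delta+1}^{1/(\Delta+1)}$ at $(x_0,y_0)=(\lambda,\mu)$ cannot give coefficientwise domination once $d<\Delta$. Take $\mu=0$ and $\lambda>0$. Your $x$-slope condition becomes
\[
  A_{d+1}(\lambda,0)^{\frac{\Delta}{d+1}}\le A_{\Delta+1}(\lambda,0)^{\frac{\Delta}{\Delta+1}},
  \quad\text{i.e.}\quad
  (1+(d+1)\lambda)^{\frac{1}{d+1}}\le(1+(\Delta+1)\lambda)^{\frac{1}{\Delta+1}}.
\]
This is false, because $k\mapsto(1+k\lambda)^{1/k}$ is strictly decreasing. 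Concretely, take $d=1$, $\Delta=2$, $\lambda=1$, $\mu=0$. The target vector is $(4/3,1/3,4/3)$, but the tangent plane at $(1,0)$ has $x$-slope $4^{-2/3}\approx0.397>1/3$. So the Bernoulli-type monotonicity you hope for points the wrong way, and the three comparisons you list cannot all be proved.

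Your fallback is then simply the definition of $\calS_\Delta$, namely $w_0\ge\Phi_\Delta(w_1,w_2)=\sup_{x,y\ge0}(g-w_1x-w_2y)$. It also has no closed form. Tangency forces $\xi=g(x_*,y_*)$ to be a root of $(\Delta+1)w_1w_2X^{2\Delta}-\Delta X^{\Delta+1}-1$, which is not a quadratic. Moreover, the critical point $(x_*,y_*)$ can leave the quadrant; in the example above, $y_*\approx-0.04$. So all the real work sits in what you call the main obstacle, and you leave it open.

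The missing ideas, as the paper supplies them, are these.
\begin{enumerate}[(a)]
\item Extend the supremum to a convex region $\Omega\supseteq(\RR_{\ge0})^2$ on which $g$ is still concave, so an out-of-quadrant critical point is harmless. This gives $\Phi_\Delta(a_1,a_2)\le\Psi_\Delta(a_1a_2)+(a_1+a_2)/\Delta$, with equality when $a_1=a_2$. It applies because, after disposing of $\lambda=\mu=0$, \Cref{prop:basic-ineq} gives $w_1w_2<1$.
\item Write everything in terms of $B=B_d(\mu)$ and $C=B_d(\lambda)$. Fixing $BC$, equivalently fixing $\lambda+\mu+d\lambda\mu$ (the kind of invariant you guessed at), fixes both $A_{d+1}$ and $w_1w_2$. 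Along that curve, $w_0-(w_1+w_2)/\Delta$ is minimised at $B=C$, which reduces the lemma to $\lambda=\mu$.
\item In the symmetric case, reduce to $x=y$ and use the tangent line at a \emph{different} point $\alpha$, defined by $H_{d+1}(\alpha)^{1/(d+1)}=H_d(\lambda)^{1/d}$ with $H_k=A_k/B_k$. It suffices to treat $\Delta=d+1$ and iterate. The needed comparison $R_{d+1}(s)\le R_d(s)$ follows from the formula for $\partial_s\log R_k$ together with $x_{d+1}(s)\le s\,x_d(s)$.
\end{enumerate}
Your tangent-plane idea survives only in step (c), and there the tangency point is not $(\lambda,\mu)$.
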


To illustrate what~\Cref{lem:Sn-membership-separately} means, it is worth noting that the case $d=\Delta$ amounts to calculating the tangent plane of $z=A_{d+1}(x,y)^{\frac{1}{d+1}}$ at $(x,y)=(\lambda,\mu)$. Indeed,
\[
    \partial_x \Bigl(A_{d+1}^{\frac{1}{d+1}}\Bigr) = \frac{1}{d+1}A_{d+1}^{-\frac{d}{d+1}}\partial_x A_{d+1} = A_{d+1}^{-\frac{d}{d+1}}B_d(y)
    \quad\text{and}\quad
    \partial_y\Bigl(A_{d+1}^{\frac{1}{d+1}}\Bigr)=A_{d+1}^{-\frac{d}{d+1}}B_d(x),
\]
so the tangent plane at $(x,y)=(\lambda,\mu)$ is
\begin{align*}\label{eq:tangent_plane}
   z
   &= A_{d+1}(\lambda,\mu)^{-\frac{d}{d+1}}\big(B_d(\mu)(x-\lambda) + B_d(\lambda)(y-\mu)\big) + A_{d+1}(\lambda,\mu)^{\frac{1}{d+1}} \\
   &= A_{d+1}(\lambda,\mu)^{-\frac{d}{d+1}}\big(B_d(\mu)(x-\lambda) + B_d(\lambda)(y-\mu) + A_{d+1}(\lambda,\mu)\big)\\
   &=  A_{d+1}(\lambda,\mu)^{-\frac{d}{d+1}} \big(B_d(\mu)x + B_d(\lambda)y +A_{d}(\lambda,\mu)\big). \numberthis
\end{align*}
As $A_{d+1}(x,y)^{\frac{1}{d+1}}$ is concave, this plane always lies above $z=A_{d+1}(x,y)^{\frac{1}{d+1}}$, which proves the case $d=\Delta$.

Having seen this,~\Cref{lem:Sn-membership-separately} essentially states that $(a_0,a_1,a_2)\in \calS_d$ implies $(a_0^{\Delta/d},a_1^{\Delta/d},a_2^{\Delta/d})\in \calS_{\Delta}$, which is arguably the most technically challenging part. Before getting into it, let us give a proof that \(\log \calS_d\) is indeed convex. 

\begin{lemma}\label{lem:Sd-log-convex}
    For \(d\ge2\), \(\log\calS_d\) is convex.
\end{lemma}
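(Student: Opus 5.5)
The plan is to check the equivalent multiplicative statement. Take $(a_0,a_1,a_2),(b_0,b_1,b_2)\in\calS_d$. We must show that for every $x,y\ge0$,
\[
    \sqrt{a_0b_0}+\sqrt{a_1b_1}\,x+\sqrt{a_2b_2}\,y \ge F(x,y), \qquad F\defeq A_{d+1}^{\frac{1}{d+1}}.
\]
The difficulty is one of direction. Cauchy--Schwarz applied to the product of the two linear forms only gives an upper bound on a sum of geometric means. The idea is to evaluate the two defining inequalities of $\calS_d$ at two cleverly chosen points $(u_1,v_1)$ and $(u_2,v_2)$, with $u_1u_2=x^2$ and $v_1v_2=y^2$. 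We choose them so that the vectors $(a_0,a_1u_1,a_2v_1)$ and $(b_0,b_1u_2,b_2v_2)$ are proportional. Cauchy--Schwarz is then an \emph{equality}.

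\textbf{Choice of points.} Suppose $x,y>0$; if $x=0$ or $y=0$, take the corresponding coordinates $u_1=u_2=0$ or $v_1=v_2=0$, and the argument below goes through verbatim. Set $\kappa\defeq a_0/b_0$. We want $a_1u_1=\kappa b_1u_2$ and $a_2v_1=\kappa b_2v_2$, together with $u_1u_2=x^2$ and $v_1v_2=y^2$. Since all $a_k,b_k>0$, these equations have the unique positive solution
\[
    u_1=x\sqrt{\kappa b_1/a_1},\quad u_2=x\sqrt{a_1/(\kappa b_1)},
\]
and similarly for $v_1,v_2$.

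\textbf{Comparing the linear forms.} With this choice, $\sqrt{a_1b_1}\,x=\sqrt{(a_1u_1)(b_1u_2)}$, and likewise for the other two terms. Because the two vectors are proportional, summing gives
\[
    \sqrt{a_0b_0}+\sqrt{a_1b_1}\,x+\sqrt{a_2b_2}\,y
    =\sqrt{(a_0+a_1u_1+a_2v_1)(b_0+b_1u_2+b_2v_2)}
    \ge\sqrt{F(u_1,v_1)\,F(u_2,v_2)},
\]
where the last step uses membership in $\calS_d$ at the two points. It therefore remains to show that
\[
    F(u_1,v_1)F(u_2,v_2)\ge F\bigl(\sqrt{u_1u_2},\sqrt{v_1v_2}\bigr)^2 .
\]

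\textbf{Main step: log-convexity in logarithmic coordinates.} The remaining inequality says that $(s,t)\mapsto\log A_{d+1}(e^s,e^t)$ is midpoint convex; the boundary cases with zero coordinates follow by continuity or by the same argument. The key observation is that
\[
    A_{d+1}(x,y)=1+(d+1)x+(d+1)y+d(d+1)xy
\]
has nonnegative coefficients, so $A_{d+1}(e^s,e^t)$ is a positive combination of exponentials $e^{\langle \alpha,(s,t)\rangle}$. By Cauchy--Schwarz,
\[
    \Bigl(\sum_\alpha c_\alpha e^{\langle\alpha,z\rangle/2}e^{\langle\alpha,z'\rangle/2}\Bigr)^2\le\sum_\alpha c_\alpha e^{\langle\alpha,z\rangle}\sum_\alpha c_\alpha e^{\langle\alpha,z'\rangle},
\]
which is exactly the needed inequality $A_{d+1}(\sqrt{u_1u_2},\sqrt{v_1v_2})^2\le A_{d+1}(u_1,v_1)A_{d+1}(u_2,v_2)$. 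Raising to the power $1/(d+1)$ then finishes the proof.

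I expect the only real obstacle to be finding the proportional choice of points that turns Cauchy--Schwarz into an equality. Once that choice is made, the rest is routine, and closedness of $\calS_d$ means midpoint convexity of $\log\calS_d$ suffices.
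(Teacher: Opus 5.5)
Your proof is correct and is essentially the paper's argument without the homogenisation. The paper passes to the homogenisation $h(w,x,y)$ of $A_{d+1}$ and rescales all three coordinates by $\sqrt{b_k/a_k}$ and $\sqrt{a_k/b_k}$; your points $(u_i,v_i)$ are exactly its $(x_i/w_i,\,y_i/w_i)$ at $w=1$, so your proportionality choice does what the homogeneous rescaling does automatically, and the final Cauchy--Schwarz over the positively weighted monomials is the same (you also make explicit the closedness of $\calS_d$ needed to pass from midpoint convexity to convexity, which the paper leaves implicit).
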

\begin{proof}
    Let $h(w,x,y)$ be the `homogenisation' of $A_{d+1}$, i.e.,
    \[
        h(w,x,y)\defeq (d+1)d w^{d-1}xy + (d+1) w^d(x+y) + w^{d+1}.
    \]
    For $w>0$, $h(w,x,y)= w^{d+1} A_{d+1}(x/w,y/w)$.
    Then \((a_0,a_1,a_2)\in\calS_d\) if and only if
    \[
        a_0 w + a_1 x + a_2 y \ge h(w,x,y)^{\frac{1}{d+1}}\quad \forall w,x,y\ge0.
    \]
    Indeed, the `if' part follows from \(h(1,x,y) = A_{d+1}(x,y)\); to see the `only if' part, note that for \(w>0\),
    \[
    	a_0w + a_1x + a_2y
    	= w (a_0 + a_1(x/w) + a_2(y/w))
    	\ge w A_{d+1}(x/w,y/w)^{\frac{1}{d+1}}
    	= h(w,x,y)^{\frac{1}{d+1}},
    \]
    and the case \(w=0\) is clear since \(h(0,x,y)=0\).

    We show that for all \((a_0,a_1,a_2),(b_0,b_1,b_2)\in \calS_d\), \((\sqrt{a_0b_0}, \sqrt{a_1b_1}, \sqrt{a_2b_2})\) is in \(\calS_d\), using this description of $\calS_d$ in terms of $h$.
    Let $(w_1,x_1,y_1)\defeq(\sqrt{b_0/a_0}w,\sqrt{b_1/a_1}x, \sqrt{b_2/a_2}y) $. Then
    \begin{align*}
        \sqrt{a_0b_0} w + \sqrt{a_1b_1} x + \sqrt{a_2b_2} y
        = a_0 w_1 + a_1x_1 + a_2 y_1 \ge h (w_1,x_1,y_1)^{\frac{1}{d+1}}.
    \end{align*}
    By swapping the roles of the $a_i$ and $b_i$, let $(w_2,x_2,y_2)\defeq(\sqrt{a_0/b_0}w,\sqrt{a_1/b_1}x, \sqrt{a_2/b_2}y)$. Then
    \begin{align*}
        \sqrt{a_0b_0} w + \sqrt{a_1b_1} x + \sqrt{a_2b_2} y= b_0 w_2 + b_1 x_2 + b_2 y_2
        \ge h(w_2,x_2,y_2)^{\frac{1}{d+1}}.
    \end{align*}
    It then remains to show that
    \[
        h(w_1,x_1,y_1)\, h(w_2,x_2,y_2)
        \ge h( \sqrt{w_1w_2},\, \sqrt{x_1x_2},\, \sqrt{y_1y_2} )^2.
    \]
    Let \(\mathbf{u}_i\defeq (w_i,x_i,y_i)\) for \(i=1,2\) and \(\mathbf{v}\defeq ( \sqrt{w_1w_2}, \sqrt{x_1x_2}, \sqrt{y_1y_2} )\).
    Write $h(w,x,y)=\sum  c_{\alpha,\beta,\gamma}w^{\alpha}x^{\beta}y^{\gamma}$, i.e., the monomial representation. As $c_{\alpha,\beta,\gamma}> 0$ always, the Cauchy--Schwarz inequality gives
    \begin{align*}
        h(\mathbf{v}) &= \sum c_{\alpha,\beta,\gamma}(w_1w_2)^{\alpha/2}(x_1x_2)^{\beta/2}(y_1y_2)^{\gamma/2} = \sum \sqrt{c_{\alpha,\beta,\gamma}w_1^\alpha x_1^\beta y_1^\gamma} \sqrt{c_{\alpha,\beta,\gamma}w_2^\alpha x_2^\beta y_2^\gamma}
      \\  &\leq \Bigl(\sum c_{\alpha,\beta,\gamma}w_1^\alpha x_1^\beta y_1^\gamma\Bigr)^{\!1/2}
      \Bigl(\sum c_{\alpha,\beta,\gamma}w_2^\alpha x_2^\beta y_2^\gamma\Bigr)^{\!1/2}
      = h(\mathbf{u}_1)^{1/2}h(\mathbf{u}_2)^{1/2}.\qedhere
    \end{align*}
\end{proof}

\subsection{Proof of \texorpdfstring{\Cref{lem:Sn-membership-separately}}{Lemma~\ref{lem:Sn-membership-separately}}}
To sketch our proof of \Cref{lem:Sn-membership-separately}, first recall that \Cref{lem:Ad+1-concave} showed that \(A_{d+1}(x,y)^{\frac{1}{d+1}}\) is concave on \((\RR_{\ge0})^2\), so its tangent plane always lies above the surface \(z=A_{d+1}(x,y)^{\frac{1}{d+1}}\). In other words, the boundary of $\calS_d$ consists of those $(a_0,a_1,a_2)\in (\RR_{\geq 0})^3$ such that \(z=a_0+a_1x+a_2y\) is tangent to \(z=A_{d+1}(x,y)^{\frac{1}{d+1}}\) at a point in the nonnegative quadrant. Thus, to show whether a point $(v_0,v_1,v_2)$ is in $\calS_\Delta$, it is convenient to compute (or at least estimate) the value $v_0^*$, where $(v_0^*,v_1,v_2)$ lies on the boundary of $\calS_\Delta$, and check if \(v_0\ge v_0^*\).
It turns out that $v_0^*$ essentially depends on the product $v_1v_2$; see \Cref{lem:Phi-upper-bound} for the precise statement. This allows us to reduce \Cref{lem:Sn-membership-separately} to the case when \(\lambda=\mu\).
After the reduction, proving a still technical but low-dimensional inequality concludes the proof.

For \(a_1,a_2>0\), let
\begin{equation}\label{eq:min-a0-when-a1-a2-given}
    \Phi_\Delta(a_1,a_2)\defeq \inf \{a_0\ge0 : (a_0,a_1,a_2)\in \calS_\Delta\}
    = \sup_{x,y\ge0} \bigl( A_{\Delta+1}(x,y)^{\frac{1}{\Delta+1}} - a_1x - a_2y \bigr).
\end{equation}
Then for a given \((w_0,w_1,w_2)\in (\RR_{>0})^3\), \((w_0,w_1,w_2)\in\calS_\Delta\) if and only if \(w_0\ge \Phi_\Delta(w_1,w_2)\). The following \lcnamecref{lem:Phi-upper-bound} provides an upper bound for \(\Phi_\Delta(a_1,a_2)\) which only depends on \(a_1a_2\) up to a simple term.

\begin{lemma}\label{lem:Phi-upper-bound}
    There is a function \(\Psi_\Delta\colon(0,1)\to\RR\) such that for all \(a_1,a_2>0\) with \(a_1a_2<1\), 
    \[
        \Phi_\Delta(a_1,a_2) \le \Psi_\Delta(a_1a_2) + (a_1+a_2)/\Delta.
    \]
    Moreover, if \(a_1=a_2\), then equality holds.
\end{lemma}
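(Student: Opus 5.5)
The plan is to remove the linear terms of \(A_{\Delta+1}\) by a shift of variables, so that \(A_{\Delta+1}\) depends only on a product, and then to optimise over that product using AM--GM. Put \(X\defeq x+1/\Delta\) and \(Y\defeq y+1/\Delta\). Then
\[
    \Delta(\Delta+1)XY=\Delta(\Delta+1)xy+(\Delta+1)(x+y)+\tfrac{\Delta+1}{\Delta},
\]
so
\[
    A_{\Delta+1}(x,y)=\Delta(\Delta+1)XY-\tfrac{1}{\Delta}\eqdef \Delta(\Delta+1)XY-\tfrac1\Delta .
\]
This depends on \((x,y)\) only through \(P\defeq XY\). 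Likewise \(a_1x+a_2y=a_1X+a_2Y-(a_1+a_2)/\Delta\). Writing \(g(P)\defeq(\Delta(\Delta+1)P-1/\Delta)^{1/(\Delta+1)}\), the definition \eqref{eq:min-a0-when-a1-a2-given} becomes
\[
    \Phi_\Delta(a_1,a_2)=\frac{a_1+a_2}{\Delta}+\sup_{X,Y\ge 1/\Delta}\bigl(g(XY)-a_1X-a_2Y\bigr).
\]
This already isolates the additive term \((a_1+a_2)/\Delta\) in the statement.

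Next fix the product \(P=XY\). Since \(X,Y\ge1/\Delta\), we have \(P\ge 1/\Delta^2\), and there \(g(P)\ge g(1/\Delta^2)=1\), so \(g\) is well defined. By AM--GM, \(a_1X+a_2Y\ge 2\sqrt{a_1a_2XY}=2\sqrt{a_1a_2P}\). I would therefore define
\[
    \Psi_\Delta(t)\defeq\sup_{P\ge 1/\Delta^2}\bigl(g(P)-2\sqrt{tP}\bigr),\qquad t\in(0,1).
\]
This immediately gives \(\Phi_\Delta(a_1,a_2)\le \Psi_\Delta(a_1a_2)+(a_1+a_2)/\Delta\). The only point needing care is that \(\Psi_\Delta\) is a genuine real-valued function. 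Since \(\Delta\ge2\), \(g(P)\) grows like \(P^{1/(\Delta+1)}\le P^{1/3}\), which is dominated by \(2\sqrt{tP}\) for any fixed \(t>0\). Hence the supremum is finite, and it is at least \(g(1/\Delta^2)-2\sqrt t/\Delta>-\infty\). The hypothesis \(a_1a_2<1\) only restricts the domain; nothing more is needed here, though it may matter in later use.

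For the equality case \(a_1=a_2=a\), observe that the AM--GM step is tight at \(X=Y=\sqrt P\). This choice is admissible, because \(P\ge1/\Delta^2\) gives \(\sqrt P\ge 1/\Delta\). Hence for every \(P\ge1/\Delta^2\),
\[
    \sup_{X,Y\ge1/\Delta}\bigl(g(XY)-a(X+Y)\bigr)\ge g(P)-2a\sqrt P .
\]
Taking the supremum over \(P\) gives the reverse inequality \(\Phi_\Delta(a,a)\ge \Psi_\Delta(a^2)+2a/\Delta\), and hence equality. I expect no serious obstacle. The main step is spotting the shift by \(1/\Delta\), which turns \(A_{\Delta+1}\) into an affine function of the single product \(XY\); after that, the only checks are that the constraint \(X,Y\ge1/\Delta\) is compatible with the symmetric optimiser and that the supremum defining \(\Psi_\Delta\) is finite.
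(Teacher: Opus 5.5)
Your proof is correct, and it takes a genuinely different and more elementary route than the paper.

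\textbf{The paper's route.} It computes the unconstrained critical point \((x_*,y_*)\) of \(A_{\Delta+1}(x,y)^{1/(\Delta+1)}-a_1x-a_2y\). It shows that \(\xi=A_{\Delta+1}(x_*,y_*)^{1/(\Delta+1)}\) is the unique root in \((1,\infty)\) of \((\Delta+1)a_1a_2X^{2\Delta}-\Delta X^{\Delta+1}-1\); this is where the paper uses \(a_1a_2<1\). It then bounds the supremum over the quadrant by the value at \((x_*,y_*)\), using the concavity from \Cref{lem:Ad+1-concave} on an auxiliary convex region \(\Omega\) containing both. Equality for \(a_1=a_2\) comes from checking that \(x_*=y_*>0\).

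\textbf{Your route.} Your shift rests on the same identity \((\Delta+1)(\Delta x+1)(\Delta y+1)=\Delta A_{\Delta+1}(x,y)+1\) that the paper uses only as a computational step, but you make it the organising principle. Once \(A_{\Delta+1}\) depends on \(XY\) alone, AM--GM shows that, apart from \((a_1+a_2)/\Delta\), the dependence on \((a_1,a_2)\) is only through \(a_1a_2\). The symmetric choice \(X=Y=\sqrt{P}\) then gives equality on the diagonal. This dispenses with the concavity lemma, the region \(\Omega\), the root analysis, and even the hypothesis \(a_1a_2<1\): your \(\Psi_\Delta\) is finite for every \(t>0\) once \(\Delta\ge 2\).

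\textbf{What each approach buys.} The paper's route gives an explicit description \(\Psi_\Delta(s)=\xi_\Delta(s)-\frac{2}{\Delta}s\,\xi_\Delta(s)^{\Delta}\) and locates the tangency point, which fits the tangent-plane picture of \(\calS_\Delta\). However, the later reduction to the symmetric case uses only three facts: the upper bound, dependence on \(w_1w_2\) alone, and equality when \(w_1=w_2\). So your version is a drop-in replacement.

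Finally, the equality clause forces \(\Psi_\Delta(t)=\Phi_\Delta(\sqrt{t},\sqrt{t})-2\sqrt{t}/\Delta\). Hence your \(\Psi_\Delta\) coincides with the paper's on \((0,1)\).
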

\begin{proof}
    We want to identify the point \((x_*,y_*)\in\RR^2\) at which
    \[
        \partial_x \bigl( A_{\Delta+1}(x,y)^{\frac{1}{\Delta+1}} \bigr) = a_1
        \quad\text{and}\quad
        \partial_y \bigl( A_{\Delta+1}(x,y)^{\frac{1}{\Delta+1}} \bigr) = a_2.
    \]
    Denote by $\xi\defeq A_{\Delta+1}(x_*,y_*)^{\frac{1}{\Delta+1}}$.
    Since
    \begin{align*}
        \partial_x \bigl( A_{\Delta+1}(x,y)^{\frac{1}{\Delta+1}} \bigr)
        &= (\Delta y+1) \, A_{\Delta+1}(x,y)^{-\Delta/(\Delta+1)}
        \quad\text{and}\\
        \partial_y \bigl( A_{\Delta+1}(x,y)^{\frac{1}{\Delta+1}} \bigr)
        &= (\Delta x+1) \, A_{\Delta+1}(x,y)^{-\Delta/(\Delta+1)},
    \end{align*}
    $a_1=(1+\Delta y_*)\xi^{-\Delta}$ and $a_2=(1+\Delta x_*)\xi^{-\Delta}$. Using the identity \((\Delta+1)(\Delta x+1)(\Delta y+1) = \Delta A_{\Delta+1}(x,y) + 1\),
    $\xi$ is a zero of the polynomial $f(X)\defeq(\Delta+1)a_1a_2 X^{2\Delta} - \Delta X^{\Delta+1} - 1$.
    The following claim, together with the assumption \(0<a_1a_2<1\), guarantees the existence of a unique zero.
    
    \begin{claim}
        For \(0<s<1\), $f(X)$ has a unique zero \(\xi_\Delta (s)\) on \((1,\infty)\).
    \end{claim}
    \begin{claimproof}
        Note that \(f(0)=-1<0\), \(f(1)=(\Delta+1)(s-1)<0\), and \(f(X)\to\infty\) as \(X\to\infty\).
        In addition, \(f'(X)=\Delta(\Delta+1) X^\Delta (2sX^{\Delta-1}-1)\) has at most one positive zero \(\alpha\defeq (2s)^{-\frac{1}{\Delta-1}}\). Also, \(f'(X)<0\) on \((0,\alpha)\) and \(f'(X)>0\) on \((\alpha,\infty)\). Thus, \(f\) has a unique zero on \((0,\infty)\), which must be larger than \(1\).
    \end{claimproof}
    To emphasise the dependence on $a_1a_2$, write
    \( A_{\Delta+1}(x_*,y_*)^{\frac{1}{\Delta+1}}=\xi_\Delta(a_1a_2) > 1\). This determines
    \begin{align}\label{eq:x*y*}
        x_* = \frac{a_2 \xi_\Delta(a_1a_2)^\Delta - 1}{\Delta}
        \quad\text{and}\quad
        y_* = \frac{a_1 \xi_\Delta(a_1a_2)^\Delta - 1}{\Delta}.
    \end{align}
    In general, this \((x_*,y_*)\) need not lie in \((\RR_{\ge0})^2\). Instead, we consider a convex set containing \((\RR_{\ge0})^2\) and \((x_*,y_*)\); we claim \(\Omega\defeq \{(x,y)\in\RR^2 : A_{\Delta+1}(x,y)>0,\, y>-1/\Delta\}\) is suitable. That \((\RR_{\ge0})^2\subseteq \Omega\) is clear, and \(A_{\Delta+1}(x_*,y_*) = \xi_\Delta(a_1a_2)^{\Delta+1}>1\). In addition, \(y_*>-1/\Delta\) follows from \(\xi_\Delta(a_1a_2)>1\) and \(a_1>0\). To see that \(\Omega\) is convex, let
    \[
        g(y)\defeq -\frac{(\Delta+1)y+1}{(\Delta+1)(\Delta y+1)}
        \quad\text{so that}\quad
        \Omega = \{ (x,y)\in\RR^2 : x>g(y),\, y>-1/\Delta \}.
    \]
    Then \(g''(y)=2\Delta(\Delta+1)^{-1}(\Delta y+1)^{-3}>0\) on \(y>-1/\Delta\), so \(g\) is convex on \((-1/\Delta,\infty)\). Hence, \(\Omega\) is convex. Now by \Cref{lem:Ad+1-concave}, \(A_{\Delta+1}(x,y)^{\frac{1}{\Delta+1}}\) is concave on \(\Omega\), so
    \begin{align*}
        \Phi_\Delta(a_1,a_2)
        &\le \sup_{(x,y)\in\Omega} \bigl( A_{\Delta+1}(x,y)^{\frac{1}{\Delta+1}} - a_1x - a_2y \bigr)
        = A_{\Delta+1}(x_*,y_*)^{\frac{1}{\Delta+1}} - a_1x_* - a_2y_*
        \\&= \xi_\Delta(a_1a_2) - (2/\Delta)a_1a_2 \xi_\Delta(a_1a_2)^\Delta + (a_1+a_2)/\Delta.
    \end{align*}
    Let \(\Psi_\Delta(s)\defeq \xi_\Delta(s) - (2/\Delta) s\xi_\Delta(s)^\Delta\) for \(0<s<1\) so that $\Phi_\Delta(a_1,a_2)\leq \Psi_\Delta(a_1a_2) + (a_1+a_2)/\Delta$.

    To see the `moreover' part, suppose \(a_1=a_2=\sqrt{s}\) for some \(s>0\). Then by definition of $\xi_\Delta$,
    \[
        (\Delta+1) s \xi_\Delta(s)^{2\Delta} = \Delta \xi_\Delta(s)^{\Delta+1} + 1 > \Delta+1,
    \]
    so \(a_1 \xi_\Delta(a_1a_2)^{\Delta} = a_2 \xi_\Delta(a_1a_2)^{\Delta} > 1\). Then \(x_*=y_*>0\) by~\eqref{eq:x*y*} and therefore,
    \[
        \Phi_\Delta(a_1,a_2) = A_{\Delta+1}(x_*,y_*)^{\frac{1}{\Delta+1}} - a_1x_* - a_2y_* = \Psi_\Delta(a_1a_2) + (a_1+a_2)/\Delta. \qedhere
    \]
\end{proof}

The following inequality will allow us to check that the condition $a_1a_2<1$ in~\Cref{lem:Phi-upper-bound} holds in the proof of~\Cref{lem:Sn-membership-separately}.

\begin{proposition}\label{prop:basic-ineq}
	Let \(d\ge1\) and \(x,y\ge0\). Then
	\(
		(dx+1)^{d+1} (dy+1)^{d+1} \le \bigl( (d+1)dxy + (d+1)(x+y) + 1 \bigr)^{2d}
	\).
	Equality holds if and only if \(x=y=0\).
\end{proposition}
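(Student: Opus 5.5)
The plan is to reduce everything to the single quantity \(P\defeq (dx+1)(dy+1)\), which is at least \(1\) for \(x,y\ge0\), using the identity already used in the proof of \Cref{lem:Phi-upper-bound}:
\[
    (d+1)(dx+1)(dy+1) = d\,A_{d+1}(x,y) + 1 .
\]
Expanding both sides gives \((d+1)\bigl(d^2xy + d(x+y) + 1\bigr) = d\bigl((d+1)dxy+(d+1)(x+y)+1\bigr)+1\), so this is routine. Since the right-hand side of the proposition is \(A_{d+1}(x,y)^{2d}\), the identity rewrites it as
\[
    A_{d+1}(x,y) = \frac{(d+1)P-1}{d} = P + \frac{P-1}{d}.
\]

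The left-hand side is \(P^{d+1}\). Taking \(2d\)-th roots (everything is positive), the claim becomes
\[
    P^{\frac{d+1}{2d}} \le P + \frac{P-1}{d}.
\]
I would prove this as a chain of two elementary inequalities:
\[
    P + \frac{P-1}{d} \ge P \ge P^{\frac{d+1}{2d}}.
\]
The first holds because \(P\ge1\). The second holds because \(P\ge1\) and \(\frac{d+1}{2d}\le 1\) for \(d\ge1\). No convexity or AM--GM is needed.

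For the equality case, first suppose equality holds in the proposition. Then the first step \(P+(P-1)/d\ge P\) must be tight, which forces \(P=1\). Since \(x,y\ge0\), \(P=1\) means \(dx+1=dy+1=1\), so \(x=y=0\). Conversely, if \(x=y=0\) then both sides of the proposition equal \(1\).

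I do not expect a genuine obstacle here. The only point needing care is spotting the substitution through the product identity; after that the inequality is one line. For \(d=1\) the second step \(P\ge P^{(d+1)/(2d)}\) is itself an equality, which is why the equality case must be extracted from the first step rather than the second.
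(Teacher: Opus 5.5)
Your proof is correct, and its first step is the paper's reduction: the paper sets \(t = x+y+dxy\), so that \((dx+1)(dy+1)=dt+1\) and \(A_{d+1}(x,y)=(d+1)t+1\). These are exactly your \(P=dt+1\) and \(A_{d+1}=P+(P-1)/d\).

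The routes differ only in the final one-variable inequality. The paper proves \((dt+1)^{d+1}\le((d+1)t+1)^{2d}\) by calculus: it sets \(f(t)=2d\log((d+1)t+1)-(d+1)\log(dt+1)\) and checks \(f(0)=0\) and \(f'(t)>0\) for \(t\ge0\). You instead split it as \((dt+1)^{d+1}\le(dt+1)^{2d}\le((d+1)t+1)^{2d}\), which uses only \(P\ge1\) and \(2d\ge d+1\).

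Your version has three advantages:
\begin{enumerate}
\item It avoids differentiation entirely.
\item It makes the equality case explicit; the paper leaves it implicit in the strict monotonicity of \(f\). You are right that for \(d=1\) the tightness must come from the step \(P\le A_{d+1}\).
\item It shows how much slack the inequality has.
\end{enumerate}
The paper's monotonicity argument yields nothing further that is used later. The proposition is only invoked to get \(w_1w_2\le1\), with equality only when \(\lambda=\mu=0\), and your argument delivers both.
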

\begin{proof}
    Let \(t\defeq x+y+dxy\). Then the desired inequality is equivalent to \((dt+1)^{d+1} \le ((d+1)t+1)^{2d}\).
    Let \(f(t) \defeq 2d\log((d+1)t+1) - (d+1) \log(dt+1)\).
    Then \(f(0) = 0\) and for \(t\ge0\),
    \[
        f'(t) = d(d+1) \frac{(d-1)t+1}{(dt+1)((d+1)t+1)} > 0. \qedhere
    \]
\end{proof}

Now we are ready to prove \Cref{lem:Sn-membership-separately}.

\begin{proof}[Proof of~\Cref{lem:Sn-membership-separately}]
For brevity, let
\[
    A\defeq A_d(\lambda,\mu),\qquad
    B\defeq B_d(\mu),\qquad
    C\defeq B_d(\lambda),\qquad
    D\defeq A_{d+1}(\lambda,\mu),
\]
and
\begin{equation}\label{eq:wi-written-as-ABCD}
    w_0 \defeq A^{\frac{\Delta}{d}} D^{-\frac{\Delta}{d+1}},
    \qquad
    w_1 \defeq B^{\frac{\Delta}{d}} D^{-\frac{\Delta}{d+1}},
    \qquad
    w_2 \defeq C^{\frac{\Delta}{d}} D^{-\frac{\Delta}{d+1}}.
\end{equation}
Then the goal is to show that
\[
    (w_0,w_1,w_2)\in \calS_\Delta.
\]
Note that
\begin{equation}\label{eq:AD-written-as-BC}
    A=A(B,C)\defeq \frac{(d-1)BC+B+C-1}{d}
    \quad\text{and}\quad
    D=\frac{(d+1)BC-1}{d}.
\end{equation}

We have \(w_1w_2\le 1\) since \(B^{d+1}C^{d+1}\le D^{2d}\) by \Cref{prop:basic-ineq}. If \(w_1w_2=1\), then equality holds in \Cref{prop:basic-ineq}, implying \(\lambda=\mu=0\). Then the goal reduces to checking \((1,1,1)\in\calS_\Delta\). This indeed holds since for all \(x,y\ge0\),
\begin{align*}
    (1+x+y)^{\Delta+1}
    &\ge \binom{\Delta+1}{2}(x+y)^2 + (\Delta+1)(x+y) + 1
    \ge (\Delta+1)\Delta xy + (\Delta+1)(x+y) + 1
    \\&= A_{\Delta+1}(x,y).
\end{align*}

\step{Reduction to the symmetric case}
In what follows, assume \(w_1w_2<1\). Our first step is to reduce it to the case \(\lambda=\mu\).
Recall that \((w_0,w_1,w_2)\in\calS_\Delta\) is equivalent to \(w_0\ge \Phi_\Delta(w_1,w_2)\). By \Cref{lem:Phi-upper-bound},
\[
    w_0 - \Phi_\Delta(w_1,w_2)
    \ge w_0 - \frac{w_1+w_2}{\Delta} - \Psi_\Delta(w_1w_2).
\]
Consider \(w_0\), \(w_1\), and \(w_2\) as functions of \(B\) and \(C\), which is possible due to \eqref{eq:wi-written-as-ABCD} and \eqref{eq:AD-written-as-BC}. Note that \(w_1w_2\) is constant when \(BC\) is fixed, and hence, so is \(\Psi_\Delta(w_1w_2)\). We claim that if \(BC\) is fixed, then \(w_0 - (w_1+w_2)/\Delta\) is minimized when \(B=C\). Let \(K\defeq (BC)^{1/2}\) and write \(B=B(t)\defeq K e^t\) and \(C=C(t)\defeq K e^{-t}\). Without loss of generality, assume \(B\ge C\) so that \(t\ge0\). Let \(p\defeq \Delta/d\). Then
\[
    \frac{d}{dt} A(B,C)= \frac{B'(t) + C'(t)}{d} = \frac{B-C}{d}
    \quad\text{and}\quad
    \frac{d}{dt} (B^{p} + C^{p}) 	= p (B^p - C^p).
\]
By~\eqref{eq:AD-written-as-BC}, $D$ is fixed and thus, by~\eqref{eq:wi-written-as-ABCD},
\[
	\frac{d}{dt} (w_0-(w_1+w_2)/\Delta) = d^{-1} \bigl( p (B-C) A^{p-1} - (B^p - C^p) \bigr) D^{-\frac{\Delta}{d+1}}.
\]
This is nonnegative, since \(A\ge B\ge C\), where \(A\ge B\) follows from \(C\ge1\), so that
\[
	p(B-C)A^{p-1}=\int_C^B pA^{p-1} \diff\zeta
	\geq \int_C^B p\zeta^{p-1} \diff\zeta = B^p-C^p.
\]
Thus, \(w_0-(w_1+w_2)/\Delta\) is minimized when \(t=0\). Therefore,
\begin{align*}
    w_0(B,C) - \Phi_\Delta(w_1(B,C), w_2(B,C))
    &\ge w_0(B,C) - \frac{w_1(B,C)+w_2(B,C)}{\Delta} - \Psi_\Delta(w_1(B,C)w_2(B,C))
    \\&\ge w_0(K,K) - \frac{w_1(K,K)+w_2(K,K)}{\Delta} - \Psi_\Delta(w_1(B,C)w_2(B,C))
    \\&= w_0(K,K) - \Phi_\Delta(w_1(K,K), w_2(K,K)).
\end{align*}
Here the equality follows from the `moreover' part of \Cref{lem:Phi-upper-bound} with \(w_1(K,K) = w_2(K,K)\). In addition, \(w_1(B,C)w_2(B,C) = w_1(K,K)w_2(K,K)\) since \(w_1w_2\) depends only on \(BC\) and \(BC = K^2\). Hence, it suffices to consider the case \(B=C\), i.e., \(\lambda=\mu\).

\step{Analysis of the symmetric case}
Let \(A_d(\lambda) \defeq A_d(\lambda,\lambda) = d(d-1)\lambda^2 + 2d\lambda + 1\).
Recall that the goal \((w_0,w_1,w_2)\in\calS_\Delta\) with \(\lambda=\mu\) states that for all integer \(1\le d\le \Delta\) and reals \(\lambda\ge0\) and \(x,y\ge0\),
\[
    \bigl( A_d(\lambda)^{\frac{\Delta}{d}} + (x+y) B_d(\lambda)^{\frac{\Delta}{d}} \bigr) A_{d+1}(\lambda)^{-\frac{\Delta}{d+1}}
    \ge A_{\Delta+1}(x,y)^{\frac{1}{\Delta+1}}.
\]
As was shown in~\eqref{eq:tangent_plane}, if $d=\Delta$, then the left-hand side corresponds to the tangent plane of $z=A_{\Delta+1}(x,y)^{\frac{1}{\Delta+1}}$ at $(x,y)=(\lambda,\lambda)$ lying above the right-hand side, so the inequality holds.
Furthermore, if \(x+y\) is fixed, then \(A_{\Delta+1}(x,y)\), an increasing function of $xy$, is maximized when \(x=y\). Assuming \(x=y\), it suffices to show that for any $x\geq 0$, $\lambda\geq 0$, and $1\leq d\leq \Delta$, 
\[
    \bigl( A_d(\lambda)^{\frac{\Delta}{d}} + 2 B_d(\lambda)^{\frac{\Delta}{d}} \cdot x \bigr) A_{d+1}(\lambda)^{-\frac{\Delta}{d+1}}
    \ge A_{\Delta+1}(x)^{\frac{1}{\Delta+1}},
\]
which is shown to be true for $d=\Delta$ and any $x,\lambda\geq 0$. Hence, if for each $\lambda\geq 0$, there exists $\alpha\geq 0$ such that

\[
     \bigl( A_d(\lambda)^{\frac{\Delta}{d}} + 2 B_d(\lambda)^{\frac{\Delta}{d}} \cdot x \bigr) A_{d+1}(\lambda)^{-\frac{\Delta}{d+1}}
    \geq \big(A_\Delta(\alpha) + 2 B_\Delta(\alpha)\cdot x \big)A_{\Delta+1}(\alpha)^{-\frac{\Delta}{\Delta+1}}
\]
for all $x\geq 0$, then we are done.

To prove $ax+b\geq a'x+b'$ for all \(x\ge0\), it is enough to show $a\geq a'$ and $b\geq b'$. In our case, this means that it is enough to find \(\alpha\ge0\) such that
\begin{align*}
    A_d(\lambda)^{\frac{\Delta}{d}} A_{d+1}(\lambda)^{-\frac{\Delta}{d+1}}\ge A_\Delta(\alpha) A_{\Delta+1}(\alpha)^{-\frac{\Delta}{\Delta+1}}
    \quad\text{and}\quad 
    B_d(\lambda)^{\frac{\Delta}{d}} A_{d+1}(\lambda)^{-\frac{\Delta}{d+1}}
    \geq B_\Delta(\alpha) A_{\Delta+1}(\alpha)^{-\frac{\Delta}{\Delta+1}}.
\end{align*}
Rearranging these gives
\[
    \frac{A_\Delta(\alpha)^{\frac{1}{\Delta}}}{A_{\Delta+1}(\alpha)^{\frac{1}{\Delta+1}}} \le \frac{A_d(\lambda)^{\frac{1}{d}}}{A_{d+1}(\lambda)^{\frac{1}{d+1}}}
    \quad\text{and}\quad
    \frac{B_\Delta(\alpha)^{\frac{1}{\Delta}}}{A_{\Delta+1}(\alpha)^{\frac{1}{\Delta+1}}} \le \frac{B_d(\lambda)^{\frac{1}{d}}}{A_{d+1}(\lambda)^{\frac{1}{d+1}}}.
\]
It suffices to prove the adjacent case \(\Delta=d+1\), since iterating this comparison gives the result for arbitrary \(d\le\Delta\). Thus, we want to show that there exists $\alpha=\alpha(\lambda,d)\geq 0$ such that
\begin{equation}\label{eq:alpha-lambda-conditions}
    \frac{A_{d+1}(\alpha)^{\frac{1}{d+1}}}{A_{d+2}(\alpha)^{\frac{1}{d+2}}} \le \frac{A_d(\lambda)^{\frac{1}{d}}}{A_{d+1}(\lambda)^{\frac{1}{d+1}}}
    \quad\text{and}\quad
    \frac{B_{d+1}(\alpha)^{\frac{1}{d+1}}}{A_{d+2}(\alpha)^{\frac{1}{d+2}}} \le \frac{B_d(\lambda)^{\frac{1}{d}}}{A_{d+1}(\lambda)^{\frac{1}{d+1}}}.
\end{equation}
For \(k\in \{d,d+1\}\), let \(H_k(x)\defeq A_k(x)/B_k(x)\).
Then by recalling \(A_k(x) = k(k-1)x^2 + 2kx + 1\) and \(B_k(x) = kx+1\),
\begin{equation}\label{eq:Hk-property}
    H_k(0)=1
    \quad\text{and}\quad
    H_k'(x) = \frac{k( (k-1)(kx^2+2x) + 1)}{B_k(x)^2} > 0,
\end{equation}
so as a function of \(x\in [0,\infty)\), $H_k(x)^{1/k}$ is strictly increasing and $H_k(x)^{1/k}\geq 1$. Moreover,
\[
    H_k(x)\to \begin{cases*}
        2 & if \(k=1\), \\
        \infty & if \(k\ge2\),
    \end{cases*}
    \quad\text{as }x\to\infty.
\]
Thus, if \(k\ge2\), for each \(s\ge1\), there is a unique \(x_k=x_k(s)\ge0\) such that \(H_k(x_k)^{1/k}=s\). For $k=1$, a unique $x_1=x_1(s)$ exists such that \(H_1(x_1)=s\) whenever $1\leq s<2$.
Let \(s\defeq H_{d}(\lambda)^{\frac{1}{d}}\ge 1\) and \(\alpha\defeq x_{d+1}(s)\). Then \(\lambda=x_{d}(s)\) by definition and \(H_{d+1}(\alpha)^{\frac{1}{d+1}} = s= H_{d}(\lambda)^{\frac{1}{d}}\).
Note that, for $d=1$, $x_1(s)$ is well-defined as $1\leq H_1(\lambda)<2$ always.
If the second condition in \eqref{eq:alpha-lambda-conditions} holds for this particular choice of $\alpha=\alpha(\lambda,d)$, i.e.,
\begin{equation}\label{eq:goal-reduced-1}
    \frac{B_{d+1}(x_{d+1}(s))^{\frac{1}{d+1}}}{A_{d+2}(x_{d+1}(s))^{\frac{1}{d+2}}}=
    \frac{B_{d+1}(\alpha)^{\frac{1}{d+1}}}{A_{d+2}(\alpha)^{\frac{1}{d+2}}} \le \frac{B_d(\lambda)^{\frac{1}{d}}}{A_{d+1}(\lambda)^{\frac{1}{d+1}}}=\frac{B_d(x_d(s))^{\frac{1}{d}}}{A_{d+1}(x_d(s))^{\frac{1}{d+1}}},
\end{equation}
then it implies the first condition, as
\[
    \frac{A_{d+1}(\alpha)^{\frac{1}{d+1}}}{A_{d+2}(\alpha)^{\frac{1}{d+2}}}
    = \frac{B_{d+1}(\alpha)^{\frac{1}{d+1}}}{A_{d+2}(\alpha)^{\frac{1}{d+2}}}
        \cdot H_{d+1}(\alpha)^{\frac{1}{d+1}}
    =\frac{B_{d+1}(\alpha)^{\frac{1}{d+1}}}{A_{d+2}(\alpha)^{\frac{1}{d+2}}}
        \cdot H_{d}(\lambda)^{\frac{1}{d}}
    \le \frac{B_d(\lambda)^{\frac{1}{d}}}{A_{d+1}(\lambda)^{\frac{1}{d+1}}}
        \cdot H_{d}(\lambda)^{\frac{1}{d}}
    = \frac{A_d(\lambda)^{\frac{1}{d}}}{A_{d+1}(\lambda)^{\frac{1}{d+1}}}.
\]

For \(k\in\{d,d+1\}\), let
\[
    R_k(s) \defeq \frac{B_k(x_k(s))^{\frac{1}{k}}}{A_{k+1}(x_k(s))^{\frac{1}{k+1}}}.
\]
To prove~\eqref{eq:goal-reduced-1}, it is enough to show \(R_{d+1}(s)\le  R_{d}(s)\) for all \(s\ge1\). Technically, one should assume $1\leq s<2$ for the case $d=1$ to maintain the well-definedness of $x_1(s)$, but this makes no changes in the arguments.
To this end, we compute $\partial_s \log R_{k}(s)$.
Writing \(x\defeq x_k(s)\) and using \(A_k(x)/B_k(x) = s^k\) and \(A_{k+1}(x) = A_k(x) + 2x B_k(x)=(s^k + 2x)B_k(x)\),
\begin{align*}
    \partial_s \log R_k(s)
    &= \left(\frac{1}{k} \frac{B_k'(x)}{B_k(x)} - \frac{1}{k+1} \frac{A_{k+1}'(x)}{A_{k+1}(x)}\right)\frac{dx}{ds}
    = \left(  \frac{1}{B_k(x)}  - \frac{2B_k(x)}{A_{k+1}(x)} \right)\frac{dx}{ds}
    \\&= \frac{s^k - 2(k-1)x - 2}{(s^k+2x) B_k(x)}\frac{dx}{ds}.
    \numberthis\label{eq:log-Phik-diff-calc-1}
\end{align*}
Differentiating both sides of \(A_k(x) = s^k B_k(x)\) by $s$ gives $A_k'(x)\frac{dx}{ds}= ks^{k-1} B_k(x) + s^k B_k'(x)\frac{dx}{ds}$, so
\[
    \frac{dx}{ds} = \frac{ks^{k-1} B_k(x)}{A_k'(x) - s^k B_k'(x)} = \frac{s^{k-1} B_k(x)}{2(k-1)x + 2 - s^k}.
\]
This together with \eqref{eq:log-Phik-diff-calc-1} gives 
\begin{equation}\label{eq:log-Phik-diff}
    \partial_s \log R_{k}(s)
    = -\frac{s^{k-1}}{s^{k}+2x_{k}(s)}.
\end{equation}
Suppose \(x_{d+1}(s) \le s x_{d}(s)\). Then \eqref{eq:log-Phik-diff} implies
\[
    \partial_s \log R_{d+1}(s) \le \partial_s \log R_{d}(s),
\]
and using \(R_{d+1}(1) = R_{d}(1) = 1\), we have \(R_{d+1}(s) \le R_{d}(s)\). Thus, it suffices to show \(x_{d+1}(s)\le s x_{d}(s)\).

\medskip

By \eqref{eq:Hk-property}, \(H_{d+1}\) is strictly increasing, so \(x_{d+1}(s)\le s x_{d}(s)\) is equivalent to
\[
    s^{d+1} = H_{d+1}(x_{d+1}(s)) \le H_{d+1}(s\, x_{d}(s)).
\]
Write \(x \defeq x_{d}(s)\). By definition,
\[
    H_{d+1}(sx) = \frac{A_{d+1}(sx)}{B_{d+1}(sx)} = 1 + dsx + \frac{sx}{(d+1)sx+1}.
\]
On the other hand,
\[
    s^{d+1} = s\cdot s^{d} = s H_{d}(x) = s \Bigl( 1 + (d-1)x + \frac{x}{dx+1} \Bigr).
\]
Recall that $s\geq 1$, which implies \(d - (d+1)s \le 0\). Using this, we have
\begin{align*}
    H_{d+1}(sx) - s^{d+1}
    &= (1-s) + sx + sx \Bigl( \frac{1}{(d+1)sx+1} - \frac{1}{dx+1} \Bigr)
    \\&= (1-s) + sx + sx \frac{(d - (d+1)s)x}{((d+1)sx+1)(dx+1)}
    \\&\ge (1-s) + sx + sx \frac{(d - (d+1)s)x}{((d+1)s + d)x + 1}.
    \numberthis\label{eq:Hk-diff}
\end{align*}
 Let \(Q(x)\) be the expression obtained by multiplying \eqref{eq:Hk-diff} by \(((d+1)s + d)x+1\). Then
\[
    Q(x) = (1-s)\big(((d+1)s +d)x +1\big) + sx(2dx+1) =
    2dsx^2 + ( 2s +d -s^2d -s^2 )x - (s-1).
\]
As $2ds>0$, \(Q(0)\le 0\), and \(Q(s-1)=(d-1)(s-1)^3\ge0\), this quadratic polynomial $Q(x)$ is nonnegative on \(x\ge s-1\). Indeed, $x=x_d(s)\geq s-1$, as
\[
    1 + d(s-1) \le s^{d} = H_{d}(x) = 1 + (d-1)x + \frac{x}{dx+1} \le 1 + dx.
\]
Hence, \(H_{d+1}(sx) \ge s^{d+1}\), which concludes the proof.
\end{proof}

\begin{remark}
    While \Aletheia's output was closer to an incomplete proof sketch than a complete proof, it contained several significant ideas. For instance, it suggested using the dual set $\mathcal{S}_d$ and its log-convexity; it also provided a preliminary form of \Cref{lem:Phi-upper-bound} and the reduction technique for \Cref{lem:Sn-membership-separately}. Based on these insights, we formalised the arguments and filled in the necessary details. In particular, for \Cref{lem:Sd-log-convex} and the symmetric case of \Cref{lem:Sn-membership-separately}, we retained only the statements provided by the model and wrote the proofs entirely independently, using different methods to ensure clarity and simplicity.
\end{remark}


\section{More towards~\texorpdfstring{\Cref{conj:main}}{Conjecture~\ref{conj:main}}}
Recall that \Cref{conj:main} states that for any graph \(G\) and antiferromagnetic edge-weighted graph \(H\),
\begin{equation}\label{eq:afm-copy}
    \hom(G,H)\ge \prod_{v\in V(G)} \hom(K_{d_v+1},H)^{\frac{1}{d_v+1}}.
\end{equation}
We show that this holds if \(\Delta(G)\le 2\).
\begin{theorem}\label{thm:conj-main-deg<=2}
    \Cref{conj:main} is true if \(\Delta(G)\le 2\).
\end{theorem}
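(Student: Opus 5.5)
Since both sides of~\eqref{eq:afm-copy} are multiplicative over connected components of \(G\), I would reduce to connected \(G\) with \(\Delta(G)\le2\). Then \(G\) is an isolated vertex, a path \(P_n\) on \(n\ge2\) vertices, or a cycle \(C_n\) with \(n\ge3\). The isolated vertex gives equality trivially.

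The tool is the spectral decomposition of \(H\) as a symmetric nonnegative matrix. Write its eigenvalues as \(\lambda_1>0\ge\lambda_2\ge\dots\ge\lambda_m\), with orthonormal eigenvectors \(u_i\). By Perron--Frobenius, \(|\lambda_i|\le\lambda_1\) for all \(i\). The homomorphism counts of the relevant graphs are
\[
\hom(C_n,H)=\operatorname{tr}H^n=\sum_i\lambda_i^n,\qquad \hom(P_n,H)=\one^{T}H^{n-1}\one=\sum_i c_i^2\lambda_i^{n-1},
\]
where \(c_i\defeq\langle \one,u_i\rangle\). The key observation is
\[
\hom(K_3,H)=\operatorname{tr}H^3=\lambda_1^3-\sum_{i\ge2}|\lambda_i|^3\le\lambda_1^3,
\]
and also \(\operatorname{tr}H^3\ge0\) since \(H\) is entrywise nonnegative.

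\emph{Cycles.} The target is \(\operatorname{tr}H^n\ge(\operatorname{tr}H^3)^{n/3}\). For even \(n\), \(\operatorname{tr}H^n\ge\lambda_1^n\ge(\operatorname{tr}H^3)^{n/3}\). For odd \(n\ge3\), I normalise \(\lambda_1=1\) and set \(t_i\defeq|\lambda_i|^3\) for \(i\ge2\), so that \(\sum t_i\le1\). The claim becomes
\[
1-\sum t_i^{n/3}\ge\Bigl(1-\sum t_i\Bigr)^{n/3}.
\]
This follows from superadditivity of \(x\mapsto x^p\) on \([0,\infty)\) for \(p=n/3\ge1\): indeed \((1-\sum t_i)^p+\sum t_i^p\le 1^p=1\).

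\emph{Paths.} Here \(P_n\) has two vertices of degree \(1\) and \(n-2\) of degree \(2\), so the target is
\[
\one^TH^{n-1}\one\ge(\one^TH\one)\,(\operatorname{tr}H^3)^{(n-2)/3}.
\]
For \(n=2\) this is an equality, so assume \(n\ge3\). Since \(\operatorname{tr}H^3\le\lambda_1^3\) and \(\one^TH\one\ge0\), it suffices to prove \(\one^TH^{k}\one\ge\lambda_1^{k-1}\,\one^TH\one\) for \(k=n-1\ge2\).

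If \(k\) is even, then \(\one^TH^k\one\ge c_1^2\lambda_1^k\ge\lambda_1^{k-1}\sum_ic_i^2\lambda_i\), because the terms with \(i\ge2\) in \(\one^TH\one\) are nonpositive. If \(k\) is odd, then using \(|\lambda_i|^k\le\lambda_1^{k-1}|\lambda_i|\),
\[
\one^TH^k\one=c_1^2\lambda_1^k-\sum_{i\ge2}c_i^2|\lambda_i|^k\ge\lambda_1^{k-1}\Bigl(c_1^2\lambda_1-\sum_{i\ge2}c_i^2|\lambda_i|\Bigr)=\lambda_1^{k-1}\,\one^TH\one.
\]

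The main point needing care is the odd case for both cycles and paths, where the negative eigenvalues pull the count down. It is handled by the bound \(|\lambda_i|\le\lambda_1\) and by superadditivity. I would also note that the antiferromagnetic hypothesis enters exactly through the sign pattern of the eigenvalues: only \(\lambda_1\) is positive.
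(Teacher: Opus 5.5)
Your proof is correct and follows essentially the same route as the paper: the walk expansion $\hom(P_n,H)=\sum_i c_i^2\lambda_i^{n-1}$ (the paper cites this from Cvetkovi\'c--Doob--Sachs, with weights $p_i=c_i^2$), the bound $\hom(P_n,H)\ge\lambda_1^{n-2}\hom(K_2,H)$ coming from $\lambda_i\le 0$ and $|\lambda_i|\le\lambda_1$ for $i\ge 2$, and $0\le\operatorname{tr}H^3\le\lambda_1^3$. The differences are cosmetic: your parity split for paths is unnecessary because $\lambda_i^k\ge\lambda_1^{k-1}\lambda_i$ holds for every $\lambda_i\le 0$ regardless of parity, and for odd cycles you use superadditivity of $x\mapsto x^{n/3}$, whereas the paper bounds term by term, $\lambda_i^n\ge\lambda_1^{n-3}\lambda_i^3$, to get $\operatorname{tr}H^n\ge\lambda_1^{n-3}\operatorname{tr}H^3\ge(\operatorname{tr}H^3)^{n/3}$.
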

\begin{proof}
    Since both sides of \eqref{eq:afm-copy} are multiplicative over the connected components of \(G\), it suffices to consider the case when \(G\) is connected. Thus, we assume \(G\) is either a path or a cycle.
    Let $\mu_1\geq \mu_2\geq\dots\geq \mu_n$ be the spectrum of $H$.

    Suppose first that $G$ is an $\ell$-edge path $P_\ell$ where \(\ell\ge1\).
    By a classical result of Cvetkovi\'{c}, Doob, and Sachs~\cite[Theorem~1.10]{cvetkovic1980spectra} (also see~\cite[Theorem~1.3.5]{cvetkovic2010introduction} for a recent source), there exist nonnegative reals $p_1,\dots,p_n$, $\sum_{i=1}^n p_i=n$, such that the (weighted) number of $m$-edge walks in $H$ is $p_1\mu_1^m+\cdots+p_n\mu_n^m$ for each $m$. We then have, by using the antiferromagnetism condition $\mu_i\leq 0$ and \(\abs{\mu_i}\le \mu_1\) for each $i>1$,
    \begin{align*}
        \hom(P_\ell,H)=p_1\mu_1^\ell +\dots +p_n\mu_n^\ell\geq \mu_1^{\ell-1}(p_1\mu_1+\cdots+p_n\mu_n).
    \end{align*}
    Again by antiferromagnetism, $\mu_1^3 \geq \mu_1^3 +\dots +\mu_n^3$, so
    \begin{align*}
        \mu_1^{\ell-1}(p_1\mu_1+\cdots+p_n\mu_n)
        &\geq (\mu_1^3+\cdots+\mu_n^3)^{\frac{\ell-1}{3}}(p_1\mu_1 +\cdots + p_n\mu_n)\\
        &=\hom(K_3,H)^{\frac{\ell-1}{3}}\hom(K_2,H),
    \end{align*}
    which is the desired bound.

    Suppose now that $G$ is a copy of $C_\ell$, the cycle of length $\ell\geq 3$. Then, by using $\abs{\mu_i}\le \mu_1$ for each $i$,
     \begin{align*}
        \hom(C_\ell,H)=\mu_1^\ell +\cdots + \mu_n^\ell \geq \mu_1^{\ell-3}(\mu_1^3 +\cdots + \mu_n^3) \geq (\mu_1^3 +\cdots +\mu_n^3)^{\frac{\ell}{3}} = \hom(K_3,H)^{\frac{\ell}{3}},
    \end{align*}
    where the last inequality uses $\mu_1^3\geq \mu_1^3 +\cdots +\mu_n^3$. This concludes the proof.
\end{proof}

\medskip

We say that an edge-weighted graph $H$ is a \emph{$q$-spin antiferromagnetic model} if it is antiferromagnetic and $\abs{V(H)}=q$. For instance, $H=\indepg$ and any graph obtained by assigning positive edge weights to the two edges of $H$ are \(2\)-spin antiferromagnetic models. \Cref{thm:2spin} hence shows that~\Cref{conj:main} holds for all 2-spin antiferromagnetic models of such kind.
Let \(K_{q+1}^\circ\) be the graph obtained from \(K_{q+1}\) by attaching a loop to one vertex, which is a $(q+1)$-spin antiferromagnetic model.
This is one of the most important $(q+1)$-spin antiferromagnetic models, as the support of an antiferromagnetic graph is always a blow-up of \(K_{q+1}\) or \(K_{q+1}^\circ\) after removing isolated vertices~\cite[Theorem~3.2]{lee2025counting}. 
Our goal is to obtain a slightly weaker bound than~\eqref{eq:afm-copy} for these graphs.

As was described in the introduction, when the host graph \(H\) is \(K_2^\circ = \indepg\), a homomorphism from \(G\) to \(H\) corresponds to an independent set of \(G\). When \(H=K_{q+1}^\circ\), a homomorphism from \(G\) to \(H\) corresponds to a \emph{semiproper colouring} of \(G\), a relaxation of proper vertex colourings by allowing \emph{looped} colours that can be used with no constraints. In particular, a homomorphism from \(G\) to \(K_{q+1}^\circ\) corresponds to a semiproper colouring of \(G\) with \(q\) proper colours and one `free' colour.

Let \(q\) be a positive integer.
For reals \(\lambda^{(i)}\), \(1\le i\le q\), define
\[
    Z^{(q)}_G(\lambda^{(1)}, \dots, \lambda^{(q)})
    \defeq \sum_{(I_1,\dots,I_q)\in \calI^{(q)}(G)} \prod_{i=1}^q (\lambda^{(i)})^{\abs{I_i}},
\]
where \(\calI^{(q)}(G)\) consists of all \(q\)-tuples \((I_1,\dots,I_q)\) of pairwise disjoint (possibly empty) independent sets of \(G\). In particular, \(Z^{(q)}_G(1,1, \dots, 1)=\hom(G,K_{q+1}^\circ)\).
The multiaffine variant of \(Z^{(q)}_G\) is defined as follows: for tuples of reals \(\blambda^{(i)} = (\lambda^{(i)}_v)_{v\in V(G)}\), \(1\le i\le q\),
\[
    Z^{(q)}_G(\blambda^{(1)}, \dots, \blambda^{(q)})
    \defeq \sum_{(I_1,\dots,I_q)\in \calI^{(q)}(G)} \prod_{i=1}^q \prod_{v\in I_i} \lambda_v^{(i)}.
\]
If \(q=1\), \(Z^{(1)}_G(\blambda^{(1)})\) is exactly the multivariate independence polynomial \(Z_G(\blambda^{(1)})\).

We conjecture that the following analogous statement of \Cref{thm:indep-poly-multiaff-lower-bd} holds for \(Z^{(q)}_G\). This is a vertex-weighted version of \Cref{conj:main} when \(H=K_{q+1}^\circ\).

\begin{conjecture}\label{conj:semiproper-colouring-poly-multiaff-lower-bd}
    Let \(G\) be a graph and \(\blambda^{(i)} = (\lambda^{(i)}_v)_{v\in V(G)}\), \(1\le i\le q\) be tuples of nonnegative reals. Then
    \[
        Z^{(q)}_G(\blambda^{(1)}, \dots, \blambda^{(q)})
        \ge \prod_{v\in V(G)} Z^{(q)}_{K_{d_v+1}} (\lambda^{(1)}_v,\dots,\lambda^{(q)}_v) ^{\frac{1}{d_v+1}}.
    \]
\end{conjecture}

We shall prove a weaker bound as an application of \Cref{thm:indep-poly-multiaff-lower-bd}, which matches the bound in the conjecture in an asymptotic sense.
For graphs \(G\) and \(K\), their \emph{Cartesian product} (or \emph{box product}) \(G\cart K\) is the graph on \(V(G)\times V(K)\) in which two vertices \((u_1,v_1)\) and \((u_2,v_2)\) are adjacent if and only if either
\begin{enumerate*}
    \item \(u_1=u_2\) and \(v_1v_2\in E(K)\) or
    \item \(u_1u_2\in E(G)\) and \(v_1=v_2\).
\end{enumerate*}
Let $K_t$ be a complete graph on \([t]\defeq \{1,\dots,t\}\) so that \(V(G\cart K_t) = V(G)\times [t]\).
Then the map
\[
    (I_1,I_2)\mapsto \{(v,1) : v\in I_1\} \sqcup \{(v,2) : v\in I_2\}.
\]
is a bijection from \(\calI^{(2)}(G)\) to \(\calI(G\cart K_2)\).
Let us illustrate how $Z^{(2)}_G(\blambda^{(1)}, \blambda^{(2)})$ corresponds to $Z_{G\cart K_2}$ with suitable parameters. 
For simplicity, let $\blambda^{(i)}=(\lambda^{(i)},\dots,\lambda^{(i)})$, $i=1,2$,  for \(\lambda^{(1)},\lambda^{(2)}\in\RR_{\ge0}\).
Then setting \(\bm{\mu}=(\mu_{(v,i)})_{(v,i)\in V(G\cart K_2)}\) where \(\mu_{(v,i)}\defeq \lambda^{(i)}\in \RR\) for \((v,i)\in V(G\cart K_2)\) gives
\begin{align*}
    Z^{(2)}_G(\blambda^{(1)}, \blambda^{(2)})
    &= \sum_{(I_1,I_2)\in \calI^{(2)}(G)} (\lambda^{(1)})^{\abs{I_1}} \, (\lambda^{(2)})^{\abs{I_2}}
    = \sum_{I\in \calI(G\cart K_2)} \prod_{(v,i)\in I} \mu_{(v,i)}
    \\&= Z_{G\cart K_2} (\bm{\mu}).
\end{align*}
By \Cref{thm:indep-poly-multiaff-lower-bd} together with the fact \(\deg_{G\cart K_2}(v,i)=d_v+1\) where \(d_v\defeq\deg_G(v)\),
\begin{align*}
    Z_{G\cart K_2} (\bm{\mu})
    &\ge \prod_{(v,i)\in V(G\cart K_2)} Z_{K_{d_v+2}}(\mu_{(v,i)})^{\frac{1}{d_v+2}}
    \\&= \prod_{v\in V(G)} (1+(d_v+2)\lambda^{(1)})^{\frac{1}{d_v+2}} \, (1+(d_v+2)\lambda^{(2)})^{\frac{1}{d_v+2}}.
\end{align*}
If \(\lambda^{(1)}, \lambda^{(2)} = o(1/\Delta(G))\), the factors in this lower bound for \(Z_G^{(2)}(\lambda^{(1)},\lambda^{(2)})\) match the conjectured factors of~\Cref{conj:semiproper-colouring-poly-multiaff-lower-bd} up to the linear terms. Indeed, as \(\Delta(G)\to \infty\),
\begin{align*}
    (1+(d_v+2)\lambda^{(1)})^{\frac{1}{d_v+2}} \, (1+(d_v+2)\lambda^{(2)})^{\frac{1}{d_v+2}}
    &= \bigl( 1+(d_v+2)(\lambda^{(1)} + \lambda^{(2)}) + (d_v+2)^2\lambda^{(1)}\lambda^{(2)} \bigr)^{\frac{1}{d_v+2}}
    \\&= 1 + (\lambda^{(1)}+\lambda^{(2)}) + o(\lambda^{(1)}+\lambda^{(2)}),
\end{align*}
while
\begin{align*}
    Z^{(2)}_{K_{d_v+1}}(\lambda^{(1)}, \lambda^{(2)})^{\frac{1}{d_v+1}}
    &= (1+(d_v+1)(\lambda^{(1)}+\lambda^{(2)}) + (d_v+1)d_v\lambda^{(1)}\lambda^{(2)})^{\frac{1}{d_v+1}}
    \\&= 1+(\lambda^{(1)}+\lambda^{(2)}) + o(\lambda^{(1)}+\lambda^{(2)}).
\end{align*}
If we assume further that \(\lambda^{(1)}=\lambda^{(2)}=\lambda=o(1/\Delta(G))\), then 
\newlength{\eqlen}
\settowidth{\eqlen}{\(Z^{(2)}_{K_{d_v+1}}(\lambda^{(1)}, \lambda^{(2)})^{\frac{1}{d_v+1}}\)}
\begin{align*}
    &\hspace{-\eqlen}
    (1+(d_v+2)\lambda^{(1)})^{\frac{1}{d_v+2}} \, (1+(d_v+2)\lambda^{(2)})^{\frac{1}{d_v+2}}
    \\&= \bigl( 1+2(d_v+2)\lambda + (d_v+2)^2\lambda^2 \bigr)^{\frac{1}{d_v+2}}
    \\&= 1 + (2\lambda + (d_v+2)\lambda^2) - \frac{1}{2}\frac{1}{d_v+2}\frac{d_v+1}{d_v+2}(2(d_v+2)\lambda)^2 + O(d_v^2\lambda^3)
    \\&= 1+ 2\lambda - d_v\lambda^2 + O(d_v^2\lambda^3),
    \intertext{while}
    Z^{(2)}_{K_{d_v+1}}(\lambda^{(1)}, \lambda^{(2)})^{\frac{1}{d_v+1}}
    &= (1+2(d_v+1)\lambda + (d_v+1)d_v\lambda^2)^{\frac{1}{d_v+1}}
    \\&= 1 + (2\lambda + d_v\lambda^2) - \frac{1}{2}\frac{1}{d_v+1}\frac{d_v}{d_v+1}(2(d_v+1)\lambda)^2 + O(d_v^2\lambda^3)
    \\&= 1 + 2\lambda - d_v\lambda^2 + O(d_v^2\lambda^3),
\end{align*}
so the factors match up to the quadratic terms.
This argument generalises to the following \lcnamecref{thm:semiproper-colouring-poly-multiaff-lower-bd-weak}, which can be seen as an approximate version of~\Cref{conj:semiproper-colouring-poly-multiaff-lower-bd}.

\begin{theorem}\label{thm:semiproper-colouring-poly-multiaff-lower-bd-weak}
    Let \(G\) be a graph, \(q\ge1\) be an integer, and \(\blambda^{(i)}=(\lambda^{(i)}_v)_{v\in V(G)}\), \(1\le i\le q\) be tuples of nonnegative reals. Then
    \[
        Z^{(q)}_G(\blambda^{(1)}, \dots, \blambda^{(q)})
        \ge \prod_{v\in V(G)} \prod_{i=1}^q Z_{K_{d_v+q}} (\lambda^{(i)}_v)^{1/(d_v+q)}.
    \]
    In particular, if \(\max_{v\in V(G),\, 1\le i\le q} \lambda^{(i)}_v=o(1/\Delta(G))\), as \(\Delta(G)\to \infty\),
    \[
        Z^{(q)}_G(\blambda^{(1)}, \dots, \blambda^{(q)})
        \ge \prod_{v\in V(G)} \biggl( Z^{(q)}_{K_{d_v+1}} (\lambda^{(1)}_v,\dots,\lambda^{(q)}_v) ^{\frac{1}{d_v+1}} - o\biggl(\sum_{i=1}^q \lambda^{(i)}_v\biggr) \biggr).
    \]
    If in addition, for each \(v\in V(G)\), \(\lambda^{(1)}_v=\dots=\lambda^{(q)}_v\eqdef \lambda_v\) for all \(1\le i\le q\),
    \[
        Z^{(q)}_G(\blambda^{(1)}, \dots, \blambda^{(q)})
        \ge \prod_{v\in V(G)} \Bigl( Z^{(q)}_{K_{d_v+1}} (\lambda_v,\dots,\lambda_v) ^{\frac{1}{d_v+1}} - O_q(d_v^2\lambda_v^3) \Bigr).
    \]
\end{theorem}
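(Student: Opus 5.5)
The plan is to generalise the $q=2$ computation preceding the statement. Use the bijection between $\calI^{(q)}(G)$ and $\calI(G\cart K_q)$ given by
\[
    (I_1,\dots,I_q)\mapsto \bigsqcup_{i=1}^q \{(v,i): v\in I_i\}.
\]
This is a bijection: a set $I\subseteq V(G)\times[q]$ is independent in $G\cart K_q$ exactly when each fibre $\{v\}\times[q]$ contains at most one element, which is the pairwise disjointness of the $I_i$, and each layer $\{(v,i): v\in I_i\}$ is independent, which is independence of each $I_i$ in $G$. Put $\mu_{(v,i)}\defeq\lambda^{(i)}_v$. Then
\[
    Z^{(q)}_G(\blambda^{(1)},\dots,\blambda^{(q)})=Z_{G\cart K_q}(\bmu).
\]
Since $\deg_{G\cart K_q}(v,i)=d_v+q-1$, \Cref{thm:indep-poly-multiaff-lower-bd} applied to $G\cart K_q$ gives
\[
    Z_{G\cart K_q}(\bmu)\ge\prod_v\prod_{i=1}^q Z_{K_{d_v+q}}(\lambda^{(i)}_v)^{1/(d_v+q)}.
\]
This is exactly the first inequality.

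For the asymptotic statements, fix $v$, write $d=d_v$, and compare the two factors by Taylor expansion. Write $L\defeq\sum_i\lambda^{(i)}_v$ and set
\[
    P\defeq\prod_{i=1}^q(1+(d+q)\lambda^{(i)}_v)^{1/(d+q)},\qquad
    Q\defeq Z^{(q)}_{K_{d+1}}(\lambda^{(1)}_v,\dots,\lambda^{(q)}_v)^{1/(d+1)}.
\]
Counting $q$-tuples of disjoint independent sets in $K_{d+1}$ (each nonempty $I_i$ is a single vertex, all distinct) gives
\[
    Z^{(q)}_{K_{d+1}}=\sum_{k=0}^{\min(q,d+1)}(d+1)^{\underline{k}}\,e_k(\lambda^{(1)}_v,\dots,\lambda^{(q)}_v),
\]
where $e_k$ is the elementary symmetric polynomial and $(d+1)^{\underline k}$ is the falling factorial. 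Each summand is $O_q((dL)^k)$, and $dL=o(1)$.

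Taking logarithms, $\log P=\sum_i(d+q)^{-1}\log(1+(d+q)\lambda^{(i)}_v)=L+O(dL^2)$, and similarly $\log Q=(d+1)^{-1}\log(1+(d+1)L+O(d^2L^2))=L+O(dL^2)$. Since $dL=o(1)$, both errors are $o(L)$, so $P=Q-o(L)$. Taking the product over $v$ gives the second inequality; the $o(\cdot)$ is uniform because it is controlled by $\max\lambda\cdot\Delta(G)\to0$.

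In the equal-weights case $\lambda^{(i)}_v=\lambda$, expand to second order, treating $d\lambda$ as the small parameter (it is $o(1)$ by hypothesis) and keeping $q$ fixed. The first-order terms of $\log P$ and $\log Q$ are both $q\lambda$. For the second-order terms:
\[
    \log P=q\Bigl(\lambda-\tfrac{(d+q)\lambda^2}{2}\Bigr)+O(d^2\lambda^3),
\]
and, with $u\defeq(d+1)q\lambda+(d+1)d\binom q2\lambda^2$,
\[
    \log Q=\tfrac{1}{d+1}\Bigl(u-\tfrac{u^2}{2}\Bigr)+O(d^2\lambda^3)
    =q\lambda+d\tbinom q2\lambda^2-\tfrac{(d+1)q^2\lambda^2}{2}+O_q(d^2\lambda^3).
\]
The $\lambda^2$ coefficients are $-\tfrac{q(d+q)}{2}$ and $\tfrac{dq(q-1)}{2}-\tfrac{(d+1)q^2}{2}=-\tfrac{q(d+q)}{2}$, which agree. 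Hence $\log P-\log Q=O_q(d^2\lambda^3)$. Since $Q=O(1)$, exponentiating gives $P\ge Q-O_q(d^2\lambda^3)$.

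The main obstacle is bookkeeping: verifying that the second-order coefficients coincide, and checking that every discarded term is genuinely $O_q(d^2\lambda^3)$. For the latter I would bound the Taylor remainders of $\log(1+t)$ explicitly for $t=O(d\lambda)$ small, using that the $k$-th term of $Z^{(q)}_{K_{d+1}}$ is $O_q(d^k\lambda^k)$, so that each contributes $O(d^{k-1}\lambda^k)\le O(d^2\lambda^3)$ for $k\ge3$.
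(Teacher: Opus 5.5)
Your proposal is correct and follows the paper's route: the first inequality comes from the bijection $\calI(G\cart K_q)\leftrightarrow\calI^{(q)}(G)$ of \Cref{lem:multiaffine-equal} together with \Cref{thm:indep-poly-multiaff-lower-bd} at degrees $d_v+q-1$. The paper omits the asymptotic statements as straightforward, and your Taylor comparison is the natural generalisation of its $q=2$ computation; your matching second-order coefficient $-q(d+q)/2$ is right. One small nit: your remainders are really $O_q((d_v+1)^2\lambda_v^3)$ rather than $O(d_v^2\lambda_v^3)$, which only matters for isolated vertices, and there, in the equal-weights case, both factors equal $1+q\lambda_v$ exactly.
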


In proving~\Cref{thm:semiproper-colouring-poly-multiaff-lower-bd-weak}, the most technical part is to extend the aforementioned bijection from \(\calI^{(2)}(G)\) to \(\calI(G\cart K_2)\). Once we have the bijection, it is straightforward to deduce \Cref{thm:semiproper-colouring-poly-multiaff-lower-bd-weak}, which we omit.

\begin{lemma}\label{lem:multiaffine-equal}
    Let \(G\) be a graph, \(q\ge1\) be an integer, and \(\blambda^{(i)}=(\lambda^{(i)}_v)_{v\in V(G)}\), \(1\le i\le q\) be tuples of nonnegative reals. Let \(\blambda\defeq (\lambda^{(i)}_v)_{(v,i)\in V(G\cart K_q)}\). Then
    \[
        Z^{(q)}_G(\blambda^{(1)}, \dots, \blambda^{(q)})
        = Z_{G\cart K_q}(\blambda).
    \]
\end{lemma}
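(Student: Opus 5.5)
We will prove the identity by exhibiting a weight-preserving bijection
\[
    \Phi\colon \calI^{(q)}(G)\to \calI(G\cart K_q),\qquad (I_1,\dots,I_q)\mapsto \bigsqcup_{i=1}^q \{(v,i): v\in I_i\},
\]
which extends the map already described for \(q=2\).

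\textbf{Well-definedness.} We check that the image of a tuple \((I_1,\dots,I_q)\) of pairwise disjoint independent sets is independent in \(G\cart K_q\). Take two distinct vertices \((u,i),(v,j)\) in the image. By the definition of the Cartesian product, there are only two ways they could be adjacent.
\begin{enumerate*}
    \item If \(u=v\) and \(i\ne j\), then \(u\in I_i\cap I_j\), contradicting disjointness.
    \item If \(i=j\) and \(uv\in E(G)\), then \(u,v\in I_i\), contradicting independence of \(I_i\).
\end{enumerate*}

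\textbf{Inverse map.} Given \(I\in\calI(G\cart K_q)\), set \(I_i\defeq\{v:(v,i)\in I\}\).
\begin{enumerate*}
    \item Each \(I_i\) is independent in \(G\), because the fibre \(V(G)\times\{i\}\) induces a copy of \(G\).
    \item The sets \(I_i\) are pairwise disjoint, because each fibre \(\{v\}\times[q]\) induces a clique \(K_q\), so \(I\) meets it in at most one vertex.
\end{enumerate*}
Both compositions of the two maps are clearly identities, so \(\Phi\) is a bijection.

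\textbf{Weights.} By construction,
\[
    \prod_{i=1}^q\prod_{v\in I_i}\lambda^{(i)}_v=\prod_{(v,i)\in\Phi(I_1,\dots,I_q)}\lambda_{(v,i)},
\]
since \(\lambda_{(v,i)}=\lambda^{(i)}_v\). Summing over \(\calI^{(q)}(G)\) and reindexing through \(\Phi\) gives \(Z^{(q)}_G(\blambda^{(1)},\dots,\blambda^{(q)})=Z_{G\cart K_q}(\blambda)\).

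There is no genuine obstacle in this argument. The only point needing care is to use both defining adjacency types of \(G\cart K_q\), and to use them in both directions: the clique fibres encode disjointness, and the \(G\)-fibres encode independence.
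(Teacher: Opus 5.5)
Your proof is correct and follows the paper's argument: the paper uses the same bijection (written as $I\mapsto(f_1(I),\dots,f_q(I))$ with $f_i(I)=\{v:(v,i)\in I\}$). It likewise uses the two adjacency types of $G\cart K_q$ to check independence and disjointness in both directions, and then reindexes the weighted sum.
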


This may be seen as a vertex-weighted version of a particular case of \cite[Lemma~3.1]{kim2016sidorenko}, which is on \(\hom(G\cart K,H)\) for graphs \(G\), \(K\), and \(H\).
It is clear that \Cref{thm:indep-poly-multiaff-lower-bd,lem:multiaffine-equal} imply the first conclusion of \Cref{thm:semiproper-colouring-poly-multiaff-lower-bd-weak}, since each vertex \((v,i)\) in \(G\cart K_q\) has degree \(\deg_G(v)+q-1\).

\begin{proof}[Proof~of~\Cref{lem:multiaffine-equal}]
    For each independent set \(I\) of \(G\cart K_q\) and $1\le i\le q$, let
    \[
        f_i(I)\defeq \{v\in V(G) : (v,i)\in I\}.
    \]
    We claim that \(I\mapsto (f_1(I),\dots,f_q(I))\) is a bijection from \(\calI(G\cart K_q)\) to \(\calI^{(q)}(G)\).
    First, if \(I\in \calI(G\cart K_q)\), then \(I\cap (V(G)\times\{i\})\in \calI(G\cart K_q)\) for each \(i\), so \(f_i(I) = \{v\in V(G) : (v,i)\in I\cap (V(G)\times \{i\})\} \in \calI(G)\). Also, if \(f_i(I)\cap f_j(I)\) contains a vertex, say \(v\), for some \(i\neq j\), then the edge \((v,i)(v,j)\) lies inside \(I\), a contradiction. Thus, the independent sets \(f_i(I)\), \(1\le i\le q\) are mutually disjoint.
    
    On the other hand, suppose \((I_1,\dots,I_q)\in \calI^{(q)}(G)\). Let \(I\defeq \{(v,i) : v\in I_i, 1\le i\le q\}\subseteq V(G\cart K_q)\). Then \(f_i(I)=I_i\) for each \(i\). Also, \(I\in \calI(G\cart K_q)\); otherwise, \(I\) contains an edge of the form \((u,i)(v,i)\) or \((u,i)(u,j)\) where \(u\neq v\) and \(i\neq j\), where the former induces an edge \(uv\) lying inside \(I_i\), and the latter implies that \(I_i\cap I_j\) is nonempty.
    
    Using this bijection,
    \[
        Z^{(q)}_G(\blambda^{(1)},\dots,\blambda^{(q)})
        = \sum_{(I_1,\dots,I_q)\in \calI^{(q)}(G)} \prod_{i=1}^q \prod_{v\in I_i} \lambda^{(i)}_v
        = \sum_{I\in \calI(G\cart K_q)} \prod_{(v,i)\in I} \lambda^{(i)}_v
        = Z_{G\cart K_q}(\blambda).
        \qedhere
    \]
\end{proof}

\section{Concluding remarks}

\paragraph{\Cref{conj:semiproper-colouring-poly-multiaff-lower-bd} for $q>2$} To generalise~\Cref{thm:semiprop-multiaff-lower-bd} to prove~\Cref{conj:semiproper-colouring-poly-multiaff-lower-bd} for $q>2$, one should replace $A_d(x,y)$ by a $q$-variate function $F_d(x_1,x_2,\dots,x_q)\defeq \sum_{k=0}^{q} (d)_k e_{k}(\mathbf{x})$, where $(d)_k$ denotes the falling factorial $d(d-1)\cdots(d-k+1)$ and $e_k(\mathbf{x})$ is the elementary symmetric function of degree $k$. Following the inductive approach, it is natural to replace $B_d$ by the first-order derivatives of $F_d$.
Then $F_d^{1/d}$ is again concave, and the dual set $\calS_d$ and its log-convexity generalises with no problems. However, \Cref{lem:Sn-membership-separately}, which heavily relies on technical advantages in low dimensions including \Cref{lem:Phi-upper-bound}, seems to be the bottleneck. More precisely, as $q$ grows, it gets harder to compute $a_0$ such that $(a_0,a_1,\dots,a_q)$ is in the boundary of $\calS_d$. We believe that $q=4$ would already require substantially new ideas, whereas it might be possible to handle the case $q=3$ with analogous techniques to ours. 

\paragraph{Extensions to negative fugacity}  If $G$ is a (hyper)graph of maximum degree $\Delta$ and each $\lambda_v$, $v\in V(G)$, is a complex number with modulus at most $\frac{\Delta^\Delta}{(\Delta+1)^{\Delta+1}}$, \cite[Theorem~10]{GMPST_2024} obtains a lower bound of the form
\begin{align*}
    Z_G(\blambda)\geq \left(1-\frac{1}{\Delta+1}\right)^{\abs{V(G)}}.
\end{align*}
As the right-hand side is smaller than $1$, this is weaker than~\Cref{thm:indep-poly-multiaff-lower-bd} for nonnegative values of the $\lambda_v$.
Having seen this comparison, one might wonder whether the range of $\lambda_v$'s in~\Cref{thm:indep-poly-multiaff-lower-bd} extends further to negative, or even complex, values. Unfortunately,~\Cref{thm:indep-poly-multiaff-lower-bd} does not extend to such a range, which was also observed by Gemini 3.0 Deep Think.
To elaborate, it is worth noting that substituting the value of $\lambda_0$ that makes the partial derivative of $(A+\lambda_0)(1+(\Delta+1)\lambda_0)^{-\frac{1}{\Delta+1}}$ with respect to $\lambda_0$ vanish also gives the inequality~\eqref{eq:key-tech-ineq-reduced}. Thus,~\eqref{eq:goal} holds if and only if~\eqref{eq:key-tech-ineq-reduced} holds. But then, analysing the Taylor expansion of $(\Delta+1)A-\Delta B^{\frac{\Delta+1}{\Delta}}-1$ around $\lambda_v=0$ proves that~\eqref{eq:key-tech-ineq-reduced} does not hold locally for some small negative values of $\lambda_v$.

\paragraph{Occupancy fractions} When proving inequalities on $Z_G(\lambda)$, or more generally on $\hom(G,H)$, the \emph{occupancy fraction method} is to compare  $\frac{d}{d\lambda}\log Z_G(\lambda)$ with $\frac{d}{d\lambda}\log L(\lambda)$, where $L(\lambda)$ is a candidate for the upper/lower bound for $Z_G(\lambda)$.
Here $\alpha_G(\lambda)\defeq \frac{\lambda}{\abs{V(G)}} \frac{d}{d\lambda}\log Z_G(\lambda)$, the so-called \emph{occupancy fraction}, has a clear combinatorial interpretation; it corresponds to the expected size of a randomly sampled independent set according to the hard-core model. The occupancy fraction method has seen a lot of success in proving combinatorial inequalities, e.g.,~\cite{DJPR_2017,davies2018average}.
One of the interesting conjectures in the area is due to Davies and Kang~\cite{davies2025hardcore}, stating that the inequality
\begin{equation}\label{eq:occupancy-fraction-lower-bdd-clique}
    \alpha_G(\lambda) \geq \frac{1}{n}\sum_{i=1}^{n} \alpha_{K_{d_i +1}}(\lambda) = \frac{1}{n}\sum_{i=1}^{n}\frac{\lambda}{1+(d_i+1)\lambda}
\end{equation}
holds for all nonnegative $\lambda$ and any simple graph $G$ on $[n]$. It is not hard to see that this conjecture implies the Sah--Sawhney--Stoner--Zhao inequality, i.e.,~\Cref{thm:indep-poly-multiaff-lower-bd} with all $\lambda_v=\lambda$. 
Although the regular case $d_v=d$ was settled by Davies, Jenssen, Perkins, and Roberts~\cite{DJPR_2017} and Davies, Sandhu, and Tan~\cite[Theorem~1.3]{davies2025expectations} proved the irregular case provided $\lambda\leq \frac{3}{(\Delta(G)+1)^2}$, the conjecture remains open. 

Having seen the Davies--Kang conjecture, it seems plausible to look for a strengthening of~\Cref{thm:indep-poly-multiaff-lower-bd} in terms of occupancy fractions.
To elaborate, for \(v\in V(G)\), let \(p_v(\blambda)\defeq \lambda_v \frac{\partial}{\partial{\lambda_v}} \log Z_G(\blambda)\) be the \emph{vertex occupancy} or \emph{vertex marginal} at \(v\). This is the probability that \(v\) is occupied by an independent set randomly sampled according to the multivariate hard-core model.
For \(t\in\mathbb{R}\), let
\[
    \alpha_G(t;\blambda)
    \defeq \frac{t}{\abs{V(G)}} \frac{\partial}{\partial t} \log Z_G(t\blambda)
    = \frac{t}{\abs{V(G)}} \sum_{v\in V(G)} \lambda_v \frac{\partial}{\partial\lambda_v} \log Z_G(t\blambda)
    = \frac{1}{\abs{V(G)}} \sum_{v\in V(G)} p_v(t\blambda).
\]
If \(t=\lambda\) and \(\blambda=(1,\dots,1)\), this is exactly \(\alpha_G(\lambda)\). Then \Cref{thm:indep-poly-multiaff-lower-bd} follows by showing
\begin{equation}\label{eq:avg-occupancy-fraction-lower-bdd-clique}
    \alpha_G(t;\blambda)
    \ge \frac{1}{\abs{V(G)}} \sum_{v\in V(G)} \alpha_{K_{d_v+1}}(t\lambda_v),
    \qquad t\in \RR_{\ge0},\, \blambda\in(\RR_{\ge0})^{V(G)},
\end{equation}
multiplying both sides by \(\abs{V(G)}/t\), and integrating with respect to \(t\) over \((0,1)\). Putting \(t=\lambda\) and \(\blambda=(1,\dots,1)\), we see that \eqref{eq:occupancy-fraction-lower-bdd-clique} is a special case of \eqref{eq:avg-occupancy-fraction-lower-bdd-clique}. Moreover, \eqref{eq:avg-occupancy-fraction-lower-bdd-clique} is equivalent to showing that for all \(\blambda\in(\RR_{\ge0})^{V(G)}\),
\[
    \sum_{v\in V(G)} p_v(\blambda) \ge \sum_{v\in V(G)} \alpha_{K_{d_v+1}}(\lambda_v).
\]

\paragraph{Acknowledgements} We are grateful to Jaeseong Oh for stimulating discussions about~\Cref{conj:main} and also to Matthew Jenssen for helpful comments on the results. We thank Google DeepMind and Tony Feng for running multiple models of Gemini to obtain the proof of~\Cref{lem:key-tech-ineq} as well as outlines to prove~\Cref{thm:semiprop-multiaff-lower-bd}.

\printbibliography

\end{document}